\documentclass[twoside,11pt,reqno]{article}
\usepackage{jmlr2e}
\usepackage[export]{adjustbox}

\usepackage{amsmath,ntheorem,amssymb,enumitem,natbib,color,ifthen,graphicx,hyperref}
\usepackage{xargs}
\usepackage{todonotes}
\usepackage[utf8]{inputenc}

\usepackage{aliascnt,bbm}

\textwidth 5.6in

\def\nset{{\mathbb{N}}}
\def\rset{\mathbb R}
\def\zset{\mathbb Z}

\def\eqsp{\;}

\newcommand{\eg}{e.g.}
\newcommand{\ie}{i.e.}
\newcommand{\wrt}{with respect to}
\newcommand{\as}{\text{a.s.}}
 \newcommand{\pscal}[2]{\left\langle#1,#2\right\rangle}
\newcommand{\normW}[2]{\left|#1\right|_#2}
\newcommand{\normWm}[2]{\left\|#1\right\|_#2}
\newcommand{\normLq}[2]{\left\|#1\right\|_{L^{#2}}}

\newcommand{\un}{\ensuremath{\mathbbm{1}}}
\newcommand{\eqdef}{\ensuremath{\stackrel{\mathrm{def}}{=}}}

\renewcommand{\H}[2]{\ifthenelse{\equal{#2}{}}{H_{#1}}{H_{#1}(#2)}}
\newcommand{\hatH}[2]{\ifthenelse{\equal{#2}{}}{\widehat{H}_{#1}}{\widehat{H}_{#1}(#2)}}

\def\Xset{\mathsf{X}} 
\def\Xsigma{\mathcal{X}} 
\def\F{\mathcal{F}} 

\def\N{\mathcal{N}}
\def\M{\mathcal{M}}

\def\true{\mathsf{true}}
\newcommandx\sequence[3][2=t,3=\zset]
{\ifthenelse{\equal{#3}{}}{\ensuremath{\{ #1_{#2}\}}}{\ensuremath{\{ #1_{#2}, \eqsp #2 \in #3 \}}}}

\def\Am{\mathsf{A}}

\def\PP{\mathbb{P}} 
\newcommand{\CPP}[3][]
{\ifthenelse{\equal{#1}{}}{{\mathbb P}\left(\left. #2 \, \right| #3 \right)}{{\mathbb P}_{#1}\left(\left. #2 \, \right | #3 \right)}}
\def\PE{\mathbb{E}} 
\newcommand{\CPE}[3][]
{\ifthenelse{\equal{#1}{}}{{\mathbb E}\left[\left. #2 \, \right| #3 \right]}{{\mathbb E}_{#1}\left[\left. #2 \, \right | #3 \right]}}


\def\L{\mathcal{L}} 


\newcommand{\norm}[1]{\left\Vert#1\right\Vert}

\def\Prox{\operatorname{Prox}}

\def\true{\mathsf{true}}
\usepackage{bm}


\newcommand{\pma}[1]{{\color{black} #1}}

\theoremstyle{plain}
\newtheorem{theorem}{Theorem}
\newtheorem{assumption}{H\hspace{-3pt}}

\newaliascnt{proposition}{theorem}
\newtheorem{proposition}[proposition]{Proposition}
\aliascntresetthe{proposition}

\newaliascnt{lemma}{theorem}
\newtheorem{lemma}[lemma]{Lemma}
\aliascntresetthe{lemma}
\newaliascnt{corollary}{theorem}

\aliascntresetthe{corollary}

\newaliascnt{definition}{theorem}

\aliascntresetthe{definition}

\newtheorem{algorithm}{Algorithm}

\newaliascnt{remark}{theorem}

\aliascntresetthe{remark}




\newcommand{\coint}[1]{\left[#1\right)}
\newcommand{\ocint}[1]{\left(#1\right]}
\newcommand{\ooint}[1]{\left(#1\right)}
\newcommand{\ccint}[1]{\left[#1\right]}

\def\rmd{\mathrm{d}}
\def\rme{\mathrm{e}}
\def\argmin{\operatorname{argmin}}

\def\1{\mathbbm{1}}
\def\gauss{\operatorname{N}}
\def\bU{\mathbf{U}}
\def\bY{\mathbf{Y}}
\def\bu{\mathbf{u}}

\def\bw{\mathbf{w}}
\def\tpi{\tilde{\pi}}
\def\iid{i.i.d.}
\def\bpi{\bar{\pi}}
\def\pg{\mathsf{PG}}

\newcommand{\Xb}[2]{X_{#1}^{(#2)}}

\setcounter{secnumdepth}{3}

\begin{document}
\title{On Perturbed Proximal Gradient  Algorithms}

\author{\name Yves F. Atchad\'e \addr University
  of Michigan, 1085 South University, Ann Arbor, 48109, MI, United
  States, \email yvesa@umich.edu   \AND
  \name Gersende Fort  \addr  LTCI, CNRS, Telecom ParisTech, Université Paris-Saclay. 46, rue Barrault 75013 Paris, France, \email gersende.fort@telecom-paristech.fr
  \AND  Eric Moulines  \addr LTCI, CNRS, Telecom ParisTech, Université Paris-Saclay. 46, rue Barrault 75013 Paris, France, \email eric.moulines@polytechnique.edu }

\editor{Léon Bottou}
\maketitle

\begin{abstract}
  We study a version of the proximal gradient algorithm for which the gradient
  is intractable and is approximated by Monte Carlo methods (and in particular
  Markov Chain Monte Carlo). We derive conditions on the step size and the
  Monte Carlo batch size under which convergence is guaranteed: both increasing
  batch size and constant batch size are considered. We also derive non-asymptotic
  bounds for an averaged version.  Our results cover both the cases of biased
  and unbiased Monte Carlo approximation. To support our findings, we discuss
  the inference of a sparse generalized linear model with random effect and the
  problem of learning the edge structure and parameters of sparse undirected
  graphical models.
\end{abstract}

\begin{keywords}
  Proximal Gradient Methods; Stochastic Optimization; Monte Carlo
approximations; Perturbed Majorization-Minimization algorithms.
\end{keywords}

\section{Introduction}
\label{sec:intro}

This paper deals with statistical optimization problems of the form:
\begin{equation*}
  \text{\textbf{(P)}}      \quad \min_{\theta\in \rset^d} F(\theta) \qquad \text{with $F = f + g$}   \eqsp.
\end{equation*}
This problem occurs in a variety of statistical and machine learning problems,
where $f$ is a measure of fit depending implicitly on some observed data and
$g$ is a regularization term that imposes structure to the solution.
Typically, \pma{$f$ is a differentiable function with a Lipschitz gradient}, whereas $g$ might be non-smooth (typical examples include sparsity
inducing penalty).

\begin{assumption}
  \label{A1} The function
  $g:\;\rset^d\to \ccint{0,+\infty}$ is convex, not identically
  $+\infty$, and lower semi-continuous. The function $f: \rset^d\to
  \rset$ is convex, continuously differentiable on $\rset^d$ and there exists a finite
  non-negative constant $L$ such that, for all $\theta,\theta'\in\rset^d$,
  \[\|\nabla f(\theta)-\nabla f(\theta')\| \leq L\|\theta-\theta'\| \;,\]
  where $\nabla f$ denotes the gradient of $f$.
\end{assumption}
We denote by $\Theta$ the domain of $g$: $\Theta \eqdef \{\theta \in \rset^d:
g(\theta) < \infty \}$.
\begin{assumption}
  \label{A1:compl} The set  $\mathrm{argmin}_{\theta\in\Theta} F(\theta)$ is a non empty subset of $\Theta$.
\end{assumption}
In this paper, we focus on the case where $f +g$ and $\nabla f$ are both
intractable.  This setting has not been widely considered despite the
considerable importance of such models in statistics and machine learning.
Intractable likelihood problems naturally occur for example in inference for bayesian networks
(\eg\ learning the edge structure and the parameters in an undirected graphical
models), regression with latent variables or random effets, missing data,
etc... In such applications, $f$ is the negated log-likelihood of a conditional
Gibbs measure $\pi_\theta$ known only up to a normalization constant and the
gradient of $\nabla f(\theta)$ is typically expressed as a very
high-dimensional integral w.r.t. the associated Gibbs measure $\nabla
f(\theta)= \int H_\theta(x) \pi_\theta(\rmd x)$.  Of course, this integral
cannot be computed in closed form and should be approximated. Most often, some
forms of Monte Carlo integration (such as Markov Chain Monte Carlo, or MCMC) is
the only option.

To cope with problems where $f+g$ is intractable and possibly
non-smooth, various methods have been proposed.  Some of these works
focused on stochastic sub-gradient and mirror descent algorithms;
see~\cite{nemirovski:judistky:lan:shapiro:2008,duchietal11,NIPS2011_0942,lan12,juditsky:nemirovski:2011a,juditsky:nemirovski:2011b}.
Other authors have proposed algorithms based on proximal operators to
better exploit the smoothness of $f$ and the properties of $g$ (see
e.g. \pma{\cite{combettes:wajs:2005}}; \cite{hu:kwok:pan:2009,xiao10,juditsky:nemirovski:2011a,juditsky:nemirovski:2011b}).

The current paper focuses on the proximal gradient algorithm \pma{(see e.g.~\cite{beck:teboulle:2010,pesquet:combettes:2012,parikh:boyd:2013} for literature review and further references).} The proximal map
\pma{(\cite{moreau:1962})} associated to $g$ is defined for
$\gamma>0$ and $\theta \in \rset^d$ by:
\begin{equation}
  \label{eq:definition-proximal-operator}
  \Prox_{\gamma,g}(\theta) \eqdef \argmin_{\vartheta\in\Theta} \left\{g(\vartheta)+\frac{1}{2\gamma}\|\vartheta-\theta\|^2\right\}.
\end{equation}
Note that under \autoref{A1}, there exists an unique point $\vartheta$
minimizing the RHS of (\ref{eq:definition-proximal-operator}) for any $\theta
\in \rset^d$ and $\gamma>0$. The proximal gradient algorithm is an iterative algorithm
which, given an initial value $\theta_0 \in \Theta$ and a sequence of positive
step sizes $\{\gamma_n, n \in \nset \}$, produces a sequence of parameters
$\{\theta_n, n \in \nset \}$ as follows:
\begin{algorithm}[Proximal gradient algorithm]
  \label{proxalgo}  Given
  $\theta_n$, compute
  \begin{equation}
    \label{eq:proxalgo:recursion}
    \theta_{n+1}=\Prox_{\gamma_{n+1}, g}\left(\theta_n -\gamma_{n+1}\nabla f(\theta_n)\right) \eqsp.
  \end{equation}
\end{algorithm}
When $\gamma_n = \gamma$ for any $n$, it is known that the iterates of the proximal gradient algorithm
$\{\theta_n, n \in \nset \}$ (\autoref{proxalgo}) converges to
$\theta_\infty$, this point is a fixed point of the proximal-gradient map
\begin{equation}
  \label{eq:definition:ProximalMap}
  T_\gamma(\theta) \eqdef \Prox_{\gamma, g} \left(\theta-\gamma\nabla f(\theta)\right) \eqsp.
\end{equation}
Under \autoref{A1} and \autoref{A1:compl}, when $\gamma_n \ocint{0,2/L}$ and
$\inf_n \gamma_n>0$, it is indeed known that the iterates of the proximal gradient algorithm
$\sequence{\theta}[n][\nset]$ defined in \eqref{eq:proxalgo:recursion} converges to a point in the set $\L$ of the
solutions of $(P)$ which coincides with the fixed points of the mapping $T_\gamma$ for any $\gamma \in \ooint{0,2/L}$
\begin{equation}
\label{eq:definition:Lset}
\L  \eqdef\argmin_{\theta \in \Theta} F(\theta) = \{\theta \in \Theta: \theta = T_{\gamma}(\theta) \} \eqsp.
\end{equation}
(see e.g.  \cite[Theorem~3.4. and Proposition~3.1.(iii)]{combettes:wajs:2005}).

Since $\nabla f(\theta)$ is intractable, the gradient $\nabla f(\theta_n)$ at
$n$-th iteration is replaced by an approximation $H_{n+1}$:
\begin{algorithm}[Perturbed Proximal Gradient algorithm]
  \label{sto_proxalgo}
  Let $\theta_0\in \Theta$ be the initial solution and
  $\sequence{\gamma}[n][\nset]$ be a sequence of positive step sizes. For
  $n\geq 1$, given $(\theta_0,\dots, \theta_n)$ construct an approximation  $H_{n+1}$ of $\nabla f(\theta_n)$
  and compute
\begin{equation}
\label{eq:perturbedalgo:recursion}
\theta_{n+1}=\Prox_{\gamma_{n+1},g}\left(\theta_n - \gamma_{n+1} H_{n+1}\right) \eqsp.
\end{equation}
\end{algorithm}
We provide in \autoref{prop:CvgCvx:PerturbedProximalGradient} sufficient
conditions on the perturbation $\eta_{n+1} = H_{n+1} - \nabla f(\theta_n)$ to
obtain the convergence of the perturbed proximal gradient sequence given by
\eqref{eq:perturbedalgo:recursion}.  We then consider an averaging scheme of
the perturbed proximal gradient algorithm: given non-negative weights
$\sequence{a}[n][\nset]$, \autoref{theo:rate-convergence-basic} provides
non-asymptotic bound of the deviation between $\sum_{k=1}^n a_k F(\theta_k) /
\sum_{k=1}^n a_k$ and the minimum of $F$.  Our results complement and extend
\cite{Rosasco:Villa:Vu:2014,nitanda:2014,xiao:zhang:2014}.

We then consider the case where the gradient $\nabla f(\theta)= \int_\Xset
H_\theta(x)\pi_\theta(\rmd x)$ is defined as an expectation (see \autoref{A2}
in \autoref{sec:MonteCarloProxGdt}).  In this case, at each iteration $\nabla
f(\theta_n)$ is approximated by a Monte Carlo average $H_{n+1}= m_{n+1}^{-1}
\sum_{j=1}^{m_{n+1}} H_{\theta_n}(X_{n+1}^{(j)})$ where $m_{n+1}$ is the size
of the Monte Carlo batch and $\{X_{n+1}^{(j)}, 1 \leq j \leq m_{n+1} \}$ is the
Monte Carlo batch.  Two different settings are covered.  In the first setting,
the samples $\{ X_{n+1}^{(j)}, 1 \leq j \leq m_{n+1}\}$ are conditionally
independent and identically distributed (i.i.d.)  with distribution
$\pi_{\theta_n}$. In such case, the conditional expectation of $H_{n+1}$ given
all the past iterations, denoted by $\CPE{H_{n+1}}{\F_n}$ (see
\autoref{sec:MonteCarloProxGdt}), is equal to $\nabla f(\theta_n)$.  In the
second setting, the Monte Carlo batch $\{X_{n+1}^{(j)}, 1 \leq j \leq
m_{n+1}\}$ is produced by running a MCMC algorithm. In such case, the
conditional distribution of $X_{n+1}^{(j)}$ given the past is no longer exactly
equal to $\pi_{\theta_n}$ which implies that $\CPE{H_{n+1}}{\F_n} \ne \nabla
f(\theta_n)$.

  \autoref{theo:approxsto:cvg} (resp. \autoref{theo:IncreasingBatch}) establish
  the convergence of the sequence $\{\theta_n, n\in \nset \}$ when the batch
  size $m_n$ is either fixed or increases with the number of iterations $n$.
  When the Monte Carlo batch $\{ X_{n+1}^{(j)}, 1 \leq j \leq m_{n+1}\}$ is  \iid\
  conditionally to the past the two theorems essentially say that with probability one,
  $\{\theta_n, n \in \nset \}$ converges to an element of the set of minimizer $\L$ as
  soon as $\sum_n \gamma_n = + \infty$ and $\sum_n \gamma_{n+1}^2/m_{n+1} <
  \infty$. Hence, one can choose either a fixed step size $\gamma_n = \gamma$
  and a batch size $\{m_n, n \in \nset \}$ increasing at least linearly (up to
  a logarithmic factor); or a decreasing step size and a fixed batch size $m_n
  = m$.  When $\{ X_{n+1}^{(j)}, 1 \leq j \leq m_{n+1}\}$ is produced by a MCMC
  algorithm (under appropriate assumptions) our theorems essentially say that
  the same convergence result holds if $\sum_n \gamma_{n} = \infty$ and $\sum_n
  \gamma_{n+1}^2 < \infty$ when $m_n=m$ is constant across iterations or
  $\sum_n \gamma_{n+1}/m_{n+1} < \infty$ if the batch size is increased.

  \autoref{theo:approxsto:cvg} and \autoref{theo:IncreasingBatch} also provide
  non asymptotic bounds for the difference $\Delta_n= \sum_{k=1}^n a_k
  F(\theta_k) / \sum_{k=1}^n a_k - \min F$ in $L^q$-norm for $q \geq 1$.  When
  the batch size sequence $m_{n+1}$ increases linearly at each iteration while
  the step size $\gamma_{n+1}$ is held constant, $\Delta_n = O(\ln n /n)$.  We
  recover (up to a logarithmic factor) the rate of the proximal gradient
  algorithm. If we now compare the complexity of the algorithms in terms of the
  number of simulations $N$ needed (and not the number of iterations), the
  error bound decreases like $O(N^{-1/2})$. The same error bound can be
  achieved by choosing a fixed batch size and a decreasing step size $\gamma_n
  = O(1/\sqrt{n})$.

  In \autoref{sec:ex2}, these results are illustrated with the problem of
  estimating a high-dimensional discrete graphical models. In \autoref{sec:ex1}, 
  we consider high-dimensional random effect logistic regression model.
  All the proofs are postponed to \autoref{sec:proofs}.

\section{Perturbed proximal gradient algorithms}
\label{sec:stochastic-proximal-gradient}

The key property to study the behavior of the sequence the perturbed
proximal gradient algorithm is the following elementary lemma which
might be seen as a deterministic version of the Robbins-Siegmund lemma
(see e.g.~\cite[Lemma 11, Chapter 2]{polyak87}). It replaces in our
analysis \cite[Lemma 3.1]{combettes:2001} for quasi-Fejer sequences
and modified Fejer monotone sequences
(see~\cite{lin:rosasco:villa:zhou:2015}). \pma{Compared to the
  Robbins-Siegmund Lemma, the sequence $(\xi_n)_n$ is not assumed to
  be nonnegative. When applied in the stochastic context as in
  Section~\ref{sec:MonteCarloProxGdt}, the fact that the result is purely deterministic and
  deals with signed perturbations $\xi_n$ allows more flexibility in the study
  of the dynamics.}  
\begin{lemma}\label{lemma:RobbinsSiegmund:deterministe}
  Let $\sequence{v}[n][\nset]$ and $\sequence{\chi}[n][\nset]$ be non-negative
  sequences and $\sequence{\xi}[n][\nset]$ be such that $\sum_n \xi_n$
  exists.  If for any $n \geq 0$,
  \[
  v_{n+1}\leq v_n - \chi_{n+1} + \xi_{n+1}
  \]
  then $\sum_n \chi_n < \infty$ and $\lim_n v_n$ exists.
\end{lemma}
\begin{proof}
  See \autoref{sec:proof:lemma:RobbinsSiegmund:deterministe}
\end{proof}
Applied with $v_n = \|\theta_n - \theta_\star\|$ for some $\theta_\star \in
\L$, this lemma is the key result for the proof of the following theorem, which
provides sufficient conditions on the stepsize sequence
$\sequence{\gamma}[n][\nset]$ and on the \emph{approximation error}:
\begin{equation}
  \label{eq:definition-eta}
  \eta_{n+1}\eqdef H_{n+1}-\nabla f(\theta_n) \eqsp,
\end{equation}
for the sequence $\sequence{\theta}[n][\nset]$ to converge to a point
$\theta_\infty$ in the set $\L$ of the minimizers of $F$. Denote by
$\pscal{\cdot}{\cdot}$ the usual inner product on $\rset^d$ associated to the
norm $\| \cdot \|$.
\begin{theorem} \label{prop:CvgCvx:PerturbedProximalGradient}
  Assume \autoref{A1} and \autoref{A1:compl}. Let
  $\sequence{\theta}[n][\nset]$ be given by \autoref{sto_proxalgo} with
  step sizes satisfying $\gamma_n \in \ocint{0, 1/L}$ for any $n\geq 1$ and
  $\sum_n \gamma_n = +\infty$. If the following series converge
\begin{equation}\label{eq:CS:BorelCantelli}
\sum_{n \geq 0} \gamma_{n+1}   \pscal{T_{\gamma_{n+1}}(\theta_n)}{\eta_{n+1}} \eqsp,  \, \quad
\sum_{n \geq 0} \gamma_{n+1} \eta_{n+1} \eqsp, \quad  \sum_{n \geq 0} \gamma_{n+1}^2 \|\eta_{n+1}\|^2  \eqsp,
\end{equation}
then there exists $\theta_\infty \in \L$ such that $\lim_n \theta_n =
\theta_\infty$.
\end{theorem}
\begin{proof}
  See \autoref{sec:proof:prop:CvgCvx:PerturbedProximalGradient}.
\end{proof}
\autoref{prop:CvgCvx:PerturbedProximalGradient} applied with $\eta_{n+1} =0$
provides sufficient conditions for the convergence of \autoref{proxalgo} to
$\L$: the algorithm converges as soon as $\gamma_n \in \ocint{0,1/L}$ and
$\sum_n \gamma_n = + \infty$.

Sufficient conditions for the convergence of $\sequence{
  \theta}[n][\nset]$ are also provided in~\cite{combettes:wajs:2005}.
When applied to our settings, Theorem 3.4. in
\cite{combettes:wajs:2005} require $\sum_n \|\eta_{n+1} \|<\infty$ and
$\inf_n \gamma_n >0$, \pma{which for instance cannot accommodate the
  fixed Monte Carlo batch size stochastic algorithms considered in
  this paper. The same limitation applies to the analysis of the
  stochastic quasi-Fejer iterations (see
  \cite{combettes:pesquet:2014}) which in our particular case requires
  $\sum_n \gamma_{n+1} \|\eta_{n+1}\|<\infty$.  These conditions are
  weakened in~\autoref{prop:CvgCvx:PerturbedProximalGradient}. However
  in all fairness we should mention that unlike the present work,
  \cite{combettes:wajs:2005} and \cite{combettes:pesquet:2014} deal
  with infinite-dimensional problems which raises additional technical
  difficulties, and study algorithms that include a relaxation
  parameter. Furthermore, in the case where $\eta_n\equiv 0$, larger
  values of the stepsize $\gamma_n$ are allowed ($\gamma_n \in (0,2/L]$.)}

\bigskip

Let $\{a_0, \cdots, a_n\}$ be non-negative real numbers.
\autoref{theo:rate-convergence-basic} provides a control of the weighted sum
$\sum_{k=1}^n a_k ( F(\theta_k) - \min F)$.
\begin{theorem}
  \label{theo:rate-convergence-basic}
  Assume \autoref{A1} and \autoref{A1:compl}.  Let
  $\sequence{\theta}[n][\nset]$ be given by \autoref{sto_proxalgo} with
  $\gamma_n \in \ocint{0, 1/L}$ for any $n\geq 1$.  For any non-negative
  weights $\{a_0, \cdots, a_n \}$, any $\theta_\star \in \L$ and any $n \geq
  1$,
\[
\sum_{k=1}^n a_k \left\{ F(\theta_k) - \min F \right\} \leq U_n(\theta_\star)
\]
where $T_\gamma$ and $\eta_n$ are given by (\ref{eq:definition:ProximalMap}) and (\ref{eq:definition-eta}) respectively and
  \begin{multline}
    U_n(\theta_\star) \eqdef \frac{1}{2}\sum_{k=1}^n
    \left(\frac{a_k}{\gamma_k}-\frac{a_{k-1}}{\gamma_{k-1}}\right)\|\theta_{k-1}-\theta_\star\|^2
    + \frac{a_0}{2 \gamma_0} \| \theta_0 - \theta_\star \|^2  \\
    - \sum_{k=1}^{n} a_{k} \pscal{T_{\gamma_{k}}(\theta_{k-1}) -
      \theta_\star}{\eta_k} + \sum_{k=1}^n a_k \gamma_k \|\eta_k \|^2 \eqsp.
    \label{eq:rate-convergence-basic}
  \end{multline}
\end{theorem}
\begin{proof}
  See~\autoref{sec:prooflemma:rate-convergence-basic}.
\end{proof}
When applied with $\eta_n = 0$, \autoref{theo:rate-convergence-basic} gives an
explicit bound of the difference $\Delta_n= A_n^{-1} \sum_{j=1}^n a_j
F(\theta_j) - \min F$ where $A_n= \sum_{k=1}^n a_k$ for the (exact) proximal
gradient sequence $\sequence{\theta}[n][\nset]$ given by \autoref{proxalgo}.
When the sequence $\{a_n/\gamma_n, n\geq 1 \}$ is non decreasing,
\eqref{eq:rate-convergence-basic} shows that $\Delta_n= O(a_n A_n^{-1}
\gamma_n^{-1})$.

Taking $a_k=1$ for any $k \geq 0$ provides a bound for the cumulative regret.
When $a_k =1$, $\gamma_k = 1/L$ for any $k \geq 0$, \cite[Proposition
1]{schmidt:leroux:bach:2011} provides a bound of order $O(1)$ under the
assumption that $\sum_n \|\eta_{n+1}\|<\infty$.  Using the inequality
$|\pscal{T_{1/L}(\theta_{k}) - \theta_\star}{\eta_{k+1}}| \leq \|\theta_k -
\theta_\star \| \|\eta_{k+1}\|$
(see~\autoref{lem:Lipschitz:GradientProximalMap}), the upper bound
$U_n(\theta_\star)$ in (\ref{eq:rate-convergence-basic}) is also $O(1)$.

When $a_n = \gamma_n$ for any $n \geq 0$, then $\sup_n U_n(\theta_\star) <
\infty$ under the assumptions that the series
\[
\sum_n \gamma_n \pscal{T_{\gamma_n}(\theta_{n-1}) - \theta_\star}{\eta_n}
\eqsp, \qquad \sum_n \gamma_{n}^2 \| \eta_{n} \|^2 \eqsp, \] converge.  In
this case,  we have
\[
\left(\frac{\sum_{k=1}^n \gamma_k F(\theta_k)}{\sum_{k=1}^n \gamma_k} - \min F
\right)= O\left(\left( \sum_{k=1}^n \gamma_k \right)^{-1} \right).
\]

\bigskip

Consider the weighted averaged sequence $\sequence{\bar\theta}[n][\nset]$
defined by
  \begin{equation}\label{ave_est}
    \bar\theta_n\eqdef  \frac{1}{A_n}\sum_{k=1}^n a_k \theta_k  \eqsp.
  \end{equation}
  Under \autoref{A1} and \autoref{A1:compl}, $F$ is convex so that $
  F\left(\bar \theta_n \right) \leq A_n^{-1} \sum_{k=1}^n a_k F(\theta_k)$.
  Therefore, \autoref{theo:rate-convergence-basic} also provides convergence
  rates for $F(\bar \theta_n) - \min F$.

\section{Stochastic Proximal Gradient algorithm}
\label{sec:MonteCarloProxGdt}
In this section, it is assumed that $H_{n+1}$ is a Monte Carlo approximation of $\nabla f(\theta_n)$, where $\nabla f(\theta)$ satisfies the
following assumption:
\begin{assumption}
  \label{A2}
  for all $\theta\in\Theta$,
  \begin{equation}
    \label{grad}
    \nabla f(\theta)=\int_\Xset H_\theta(x)\pi_\theta(\rmd x) \eqsp,
  \end{equation}
  for some probability measure $\pi_\theta$ on a measurable space
  $(\Xset,\Xsigma)$ and an integrable function $(\theta,x) \mapsto H_\theta(x)$
  from $\Theta \times \Xset$ to $\Theta$.
\end{assumption}
\pma{Note that $\Xset$ is not necessarily a topological space, even
  if, in many applications, $\Xset \subseteq \rset^d$.}

Assumption \autoref{A2} holds in many problems (see \autoref{sec:ex2} and
\autoref{sec:ex1}).  To approximate $\nabla f(\theta)$, several options are
available. Of course, when the dimension of the state space $\Xset$ is small to
moderate, it is always possible to perform a numerical integration using either
Gaussian quadratures or low-discrepancy sequences. Another possibility is to
approximate these integrals: nested Laplace approximations have been considered
recently for example in \cite{buhlmannetal:2014} and further developed in
\cite{ogden2015}. Such approximations necessarily introduce some
bias, which might be difficult to control. In addition, these techniques are
not applicable when the dimension of the state space $\Xset$ becomes large. In
this paper, we rather consider some form of Monte Carlo approximation.

When sampling $\pi_\theta$ is doable, then an obvious choice is to
use a naive Monte Carlo estimator which amounts to sample a batch $\{
\Xb{n+1}{j}, 1 \leq j \leq m_{n+1} \}$ independently of the past values of the
parameters $\{\theta_j, j \leq n\}$ and of the past draws i.e.  independently
of the $\sigma$-algebra
\begin{equation}
\label{eq:definition-Fn}
\F_n \eqdef \sigma(\theta_0, \Xb{k}{j}, 0 \leq k \leq n, 0 \leq j \leq m_k) \eqsp.
\end{equation}
We then form
\[
H_{n+1}= m_{n+1}^{-1} \sum_{j=1}^{m_{n+1}} H_{\theta_n}(\Xb{n+1}{j}) \eqsp.
\]
Conditionally to $\F_n$, $H_{n+1}$ is an unbiased estimator of $\nabla
f(\theta_n)$.  The batch size $m_{n+1}$ can either be chosen to be fixed
across iterations or to increase with $n$ at a certain rate.  In the first
case, $H_{n+1}$ is not converging.  In the second case, the approximation error
is vanishing. The fixed batch-size case is closely related to Robbins-Monro
stochastic approximation (the mitigation of the error is performed by letting
the stepsize $\gamma_n \to 0$); the increasing batch-size case is related to
Monte Carlo assisted optimisation; see for example \cite{geyer:1994}.

The situation that we are facing in \autoref{sec:ex2} and \autoref{sec:ex1} is
more complicated because direct sampling from $\pi_\theta$ is not an option.
Nevertheless, it is fairly easy to construct a Markov kernel $P_\theta$ with
invariant distribution $\pi_\theta$. Monte Carlo Markov Chains (MCMC) provide a
set of principled tools to sample from complex distributions over large
dimensional spaces.  In such case, conditional to the past, $\{ \Xb{n+1}{j}, 1
\leq j \leq m_{n+1} \}$ is a realisation of a Markov chain with transition
kernel $P_{\theta_n}$ and \pma{started from} $\Xb{n}{m_n}$ (the last sample
draws in the previous minibatch).

\pma{Recall that a Markov kernel $P$ is an application on $\Xset
  \times \Xsigma$, taking values in $\ccint{0,1}$ such that for any $x
  \in \Xset$, $P(x,\cdot)$ is a probability measure on $\Xsigma$; and
  for any $A \in \Xsigma$, $x \mapsto P(x,A)$ is measurable.
  Furthermore, if $P$ is a Markov kernel on $\Xset$, we denote by
  $P^k$ the $k$-th iterate of $P$ defined recursively as $P^0(x,A)
  \eqdef \1_A(x)$, and $P^k(x,A) \eqdef \int P^{k-1}(x,\rmd z)P(z,A)$,
  $k\geq 1$. Finally, the kernel $P$ acts on probability measure: for
  any probability measure $\mu$ on $\Xsigma$, $\mu P$ is a probability
  measure defined by
  \[
\mu P(A) \eqdef \int \mu(\rmd x) P(x,A), \qquad A \in \Xsigma;
\]
and $P$ acts on positive measurable functions: for a measurable
function $f:\Xset \to \rset_+$, $Pf$ is a function defined by
\[
Pf(x) \eqdef \int f(y) \, P(x, \rmd y).
\]
We refer the reader to \cite{meyn:tweedie:2009} for the definitions
and basic properties of Markov chains.}

\pma{In this Markovian setting,} it is possible to consider the fixed batch
case and the increasing batch case. From a mathematical standpoint,
the fixed batch case is trickier, because $H_{n+1}$ is no longer an
unbiased estimator of $\nabla f(\theta_n)$, \ie\ the bias $B_n$
defined by
\begin{eqnarray}
\label{eq:MonteCarlo:bias}
B_n \eqdef  \CPE{H_{n+1}}{\F_n} - \nabla f(\theta_n) & = \pma{ m_{n+1}^{-1} \sum_{j=1}^{m_{n+1}} \CPE{
\H{\theta_n}{\Xb{n+1}{j}}}{\F_n}  - \nabla f(\theta_n)} \nonumber  \\
& = m_{n+1}^{-1} \sum_{j=1}^{m_{n+1}}  P_{\theta_n}^j \H{\theta_n}{\Xb{n+1}{0}} - \nabla f(\theta_n)  \eqsp,
\end{eqnarray}
does not vanish. When $m_n= m$ is small, the bias can even be pretty large, and
the way the bias is mitigated in the algorithm requires substantial
mathematical developments, which are not covered by the results currently
available in the literature (see
e.g.~\cite{combettes:pesquet:2014,Rosasco:Villa:Vu:2014,combettes:pesquet:2015,rosasco:villa:vu:2015,lin:rosasco:villa:zhou:2015}).

To capture in a common unifying framework these two different situations we
assume that
\begin{assumption}
\label{hyp:MonteCarloSamples}
$H_{n+1}$ is a Monte Carlo approximation of the expectation $\nabla
f(\theta_n)$ :
\[
H_{n+1} = m_{n+1}^{-1} \sum_{j=1}^{m_{n+1}} H_{\theta_n}(\Xb{n+1}{j}) \eqsp;
\]
for all $n \geq 0$, conditionally to the past, $\{ \Xb{n+1}{j}, 1 \leq j \leq
m_{n+1} \}$ is a Markov chain started from $\Xb{n}{m_n}$ and with transition
kernel $P_{\theta_n}$ (we set $\Xb{0}{m_0} =x_\star \in \Xset$). For all
$\theta \in \Theta$, $P_\theta$ is a Markov kernel with invariant distribution
$\pi_\theta$.
\end{assumption}
For a measurable function $V: \Xset \to \coint{1,\infty}$, a signed measure
$\mu$ on the $\sigma$-field of $\Xset$, and a function $f: \Xset \to
\rset$, define
\[
\normW{f}{V} \eqdef \sup_{x \in \Xset} \frac{|f(x)|}{V(x)} \eqsp, \qquad
\normWm{\mu}{V} \eqdef \sup_{f, \normW{f}{V} \leq 1} \left|\int f \, \rmd \mu
\right| \eqsp.
 \]

 \begin{assumption} \label{hyp:UniformErgo}
 \pma{  There exist $\lambda \in \ooint{0,1}$, $b < \infty$, $p \geq 2$ and
   a measurable function $W: \Xset \to \coint{1, +\infty}$ such that
 \[
 \sup_{\theta \in \Theta}  \normW{H_\theta}{W} < \infty
 \eqsp, \qquad \sup_{\theta \in \Theta} P_\theta W^p \leq \lambda W^p + b \eqsp.
 \]
 In addition, for any $\ell \in \ocint{0,p}$, there
exist $C < \infty$ and $\rho \in \ooint{0,1}$ such that for any $x \in \Xset$,
\begin{equation}\label{eq:Vnorme:ergo:geom}
\sup_{\theta \in \Theta} \| P_\theta^n(x,\cdot) - \pi_\theta \|_{W^\ell} \leq C
\rho^n W^\ell(x) \eqsp.
\end{equation}}
\end{assumption}
\pma{Sufficient conditions for the uniform-in-$\theta$ ergodic
  behavior \eqref{eq:Vnorme:ergo:geom} are given e.g. in of
~\cite{fort:moulines:priouret:2012} Lemma 2.3., in terms of
aperiodicity, irreducibility and minorization conditions on the
kernels $\{P_\theta, \theta \in \Theta \}$.} Examples of MCMC kernels
$P_\theta$ satisfying this assumption can be found in
\cite[Proposition 12]{andrieu:moulines:2006}, \cite[Proposition
15]{saksman:vihola:2010}, \cite[Proposition
3.1.]{fort:moulines:priouret:2012}, \cite[Proposition
3.2.]{schreck:fort:moulines:2013}, \cite[Proposition
1]{allassonniere:kuhn:2013}, \cite[Proposition
3.1.]{fort:jourdain:kuhn:lelievre:stoltz:2015}. 

The proof of the results below consists in verifying the conditions of
\autoref{prop:CvgCvx:PerturbedProximalGradient} with the error term defined by
$\eta_{n+1} = m_{n+1}^{-1} \sum_{j=1}^{m_{n+1}} \H{\theta_n}{X_{n+1}^{(j)}} -
\nabla f(\theta_n)$.  If the approximation is unbiased in the sense that
$\CPE{\eta_{n+1}}{\F_n}= 0$, then $\sequence{\eta}[n][\nset]$ is a martingale
increment sequence.  In all the other cases, we decompose $\eta_{n+1}$ as the
sum of a martingale increment term and a remainder term.  When the batch size
$\sequence{m}[n][\nset]$ is increasing, the martingale increment sequence can
be set to $\eta_{n+1} - \CPE{\eta_{n+1}}{\F_n}$ and the remainder term
$\CPE{\eta_{n+1}}{\F_n}$ will be shown to be vanishingly small.  When the batch
size $\sequence{m}[n][\nset]$ is constant, then $\CPE{\eta_{n+1}}{\F_n}$ does
not vanish.  A more subtle definition of the martingale increment has to be
done, introducing the Poisson equation for Markov chain (see \autoref{prop:AS:TermeFluctuations} in \autoref{sec:proofs}).

\subsection{Monte Carlo approximation with fixed batch-size}
\label{sec:MC:fixedabatch}
We first study the case when $m_n = m$ for any $n \in \nset$.
\autoref{theo:approxsto:cvg} provides sufficient conditions for the convergence
towards the limiting set $\L $ and for a bound for $\sum_{k=1}^n a_k
F(\theta_k) - \min F$.  Consider the following assumption
\begin{assumption} \label{hyp:AS:withbias}
  \begin{enumerate}[label=(\roman*)]
  \item \label{hyp:smooth:intheta} there exists a constant $C$ such that for
    any $\theta, \theta' \in \Theta$
\[
\normW{\H{\theta}{}- \H{\theta'}{}}{W} + \sup_{x}\frac{
  \normWm{P_\theta(x,\cdot) - P_{\theta'}(x,\cdot)}{W}}{W(x)} +
\normWm{\pi_\theta - \pi_{\theta'}}{W} \leq C \, \| \theta - \theta' \| \eqsp.
\]
\item \label{hyp:AS:StabiliteCompact:item2} $\sup_{\gamma \in \ocint{0,1/L}}
  \sup_{\theta \in \Theta} \gamma^{-1} \ \norm{\Prox_{\gamma,g}(\theta) - \theta} <
  \infty$.
\item  \label{hyp:AS:stepsize:bias} $\sum_n | \gamma_{n+1} - \gamma_n | < \infty$.
  \end{enumerate}
\end{assumption}

Assumption \autoref{hyp:AS:withbias}-\ref{hyp:smooth:intheta} requires
a Lipschitz-regularity in the parameter $\theta$ of the Markov kernel
$P_\theta$ which, for MCMC algorithms, is inherited under mild
additional conditions from the Lipschitz regularity in $W$-norm of the
target distribution. Such conditions have been worked out for general
families of MCMC kernels including Hastings-Metropolis dynamics, Gibbs
samplers, and hybrid MCMC algorithm; see for example Proposition 12 in
\cite{andrieu:moulines:2006}, the proof of Theorem 3.4. in
\cite{fort:moulines:priouret:2012}, Lemmas 4.6. and 4.7. in
\cite{fort:jourdain:kuhn:lelievre:stoltz:2015} and the references
therein.  It is a classical assumption when studying Stochastic
Approximation with conditionally Markovian dynamic (see
e.g.~\cite{benveniste:etal}, \cite{andrieu:moulines:priouret:2005},
\cite{fort:moulines:vihola:schreck:2014}).

We prove in \autoref{prop:SousGradient} that when $g$ is proper,
convex, Lipschitz on $\Theta$, then
\autoref{hyp:AS:withbias}-\ref{hyp:AS:StabiliteCompact:item2} is
satisfied. \pma{In particular, if $\Theta$ is a closed convex set,
  \autoref{hyp:AS:withbias}-\ref{hyp:AS:StabiliteCompact:item2} is
  satisfied with the Lasso or fused Lasso penalty. If $\Theta$ is a compact convex
  set, then
  \autoref{hyp:AS:withbias}-\ref{hyp:AS:StabiliteCompact:item2} is
  satisfied by the elastic-net penalty.}

For a random variable $Y$, denote by $\normLq{Y}{q}= (\PE[|Y|^q])^{1/q}$.
\begin{theorem}
  \label{theo:approxsto:cvg}
  Assume $\Theta$ is bounded. Let $\{\theta_n, n \geq 0 \}$ be given by
  \autoref{sto_proxalgo} with $\gamma_n \in \ocint{0,1/L}$ for any $n \geq 0$.
  Assume \autoref{A1}--\autoref{hyp:UniformErgo}, $m_n = m \geq 1$ and, if the
  Monte Carlo approximation is biased, assume also \autoref{hyp:AS:withbias}.
\begin{enumerate}[label=(\roman*)]
\item Assume that $\sum_n \gamma_n = \infty$ and $\sum_n \gamma_n^{2} <
  \infty$. With probability one, there exists $\theta_\infty \in \L$ such that
  $\lim_{n \to \infty} \theta_n = \theta_\infty$.
\item For any $q \in \ocint{1,p/2}$ there exists a constant $C$ such that for
  any non-negative numbers $\{a_0, \cdots, a_n \}$
\begin{multline*}
\normLq{\sum_{k=1}^n a_k \left\{ F(\theta_k) - \min F \right\}}{q}
\\
\leq C \left( \frac{a_0}{\gamma_0} + \sum_{k=1}^n \left| \frac{a_k}{\gamma_k} -
    \frac{a_{k-1}}{\gamma_{k-1}} \right| + \left(\sum_{k=1}^n a_k^2
  \right)^{1/2} + \sum_{k=1}^n a_k \gamma_k + \upsilon \sum_{k=1}^n \left| a_{k} - a_{k-1} \right| \right)
\end{multline*}
and
\begin{multline*}
\sum_{k=1}^n a_k \{ \PE[ F(\theta_k) ] - \min F \} \\
\leq
C \left( \frac{a_0}{\gamma_0} + \sum_{k=1}^n \left| \frac{a_k}{\gamma_k} -
    \frac{a_{k-1}}{\gamma_{k-1}} \right| + \sum_{k=1}^n a_k \gamma_k + \upsilon \sum_{k=1}^n \left| a_{k} - a_{k-1} \right| \right)
\end{multline*}
where $\upsilon = 0$ if the Monte Carlo approximation is unbiased and $\upsilon
=1$ otherwise.
\end{enumerate}
\end{theorem}
\begin{proof}
  The proof is postponed to \autoref{sec:proof:MC:fixedabatch}.
\end{proof}
When $a_n =1$ and $\gamma_n = (n+1)^{-1/2}$, \autoref{theo:approxsto:cvg} shows
that when $n \to \infty$,
\[
\normLq{ n^{-1} \sum_{k=1}^n F(\theta_k) - \min F }{q} = O\left(
  \frac{1}{\sqrt{n}}\right).
\]
\pma{ An upper bound $O(\ln n/\sqrt{n})$ can be obtained from Theorem
  \ref{theo:approxsto:cvg} by choosing $a_n=\gamma_n = (n+1)^{-1/2}$.
}

\subsection{Monte Carlo approximation with increasing batch size}
\label{sec:MC:increasingbatch}
The key property to discuss the asymptotic behavior of the algorithm is the
following result
\begin{proposition}\label{prop:erreurLp:sumMC}
  Assume \autoref{A2}, \autoref{hyp:MonteCarloSamples} and
  \autoref{hyp:UniformErgo}. There exists a constant $C$ such that w.p. 1 for
  any $n \geq 0$,
\begin{align*}
  \| \CPE{\eta_{n+1}}{\F_n} \| \leq C \, m_{n+1}^{-1} W(X_n^{(m_n)})\eqsp,
  \qquad \PE\left[\| \eta_{n+1}\|^p\vert \F_n \right] \leq C \ m_{n+1}^{-p/2}
  \, W^p(X_n^{(m_n)}) \eqsp.
\end{align*}
\end{proposition}
\begin{proof}
  The first inequality follows from (\ref{eq:MonteCarlo:bias}) and
  (\ref{eq:Vnorme:ergo:geom}). The second one is established
  in~\cite[Proposition 12]{fort:moulines:2003}.
\end{proof}

\begin{theorem}
  \label{theo:IncreasingBatch}
  Assume $\Theta$ is bounded. Let $\{\theta_n, n \geq 0 \}$ be given by
  \autoref{sto_proxalgo} with $\gamma_n \in \ocint{0,1/L}$ for any $n \geq 0$.
  Assume \autoref{A1}--\autoref{hyp:UniformErgo}.
\begin{enumerate}[label=(\roman*)]
\item Assume $\sum_n \gamma_n = +\infty$, $\sum_n \gamma_{n+1}^2 m_{n+1}^{-1}<
  \infty$ and, if the approximation is biased, $\sum_n \gamma_{n+1}
  m_{n+1}^{-1} < \infty$.  With probability one, there exists $\theta_\infty
  \in \L $ such that $ \lim_{n \to \infty} \theta_n = \theta_\infty$.
\item For any $q \in \ocint{1,p/2}$, there exists a constant $C$ such that for
  any non-negative numbers $\{a_0, \cdots, a_n \}$
\begin{multline*}
  \normLq{\sum_{k=1}^n a_k \left\{ F(\theta_k) - \min F \right\}}{q}
  \\
  \leq C \left( \frac{a_0}{\gamma_0} + \sum_{k=1}^n \left| \frac{a_k}{\gamma_k}
      - \frac{a_{k-1}}{\gamma_{k-1}} \right| + \left(\sum_{k=1}^n a_k^2
      m_{k}^{-1} \right)^{1/2} + \sum_{k=1}^n a_k \gamma_k m_{k}^{-1}+
    \upsilon \sum_{k=1}^n a_{k} m_{k}^{-1} \right)
\end{multline*}
and
\begin{multline*}
  \sum_{k=1}^n a_k \{ \PE[ F(\theta_k) ] - \min F \} \\
  \leq C \left( \frac{a_0}{\gamma_0} + \sum_{k=1}^n \left| \frac{a_k}{\gamma_k}
      - \frac{a_{k-1}}{\gamma_{k-1}} \right| + \sum_{k=1}^n a_k \gamma_k
    m_k^{-1} + \upsilon \sum_{k=1}^n a_k m_k^{-1} \right) \eqsp,
\end{multline*}
where $\upsilon = 0$ if the Monte-Carlo approximation is unbiased and $\upsilon =1$ otherwise.
\end{enumerate}
\end{theorem}
\begin{proof}
  See \autoref{theo:proof:IncreasingBatch}.
\end{proof}
\autoref{theo:IncreasingBatch} shows that when $n \to \infty$,
\[
\normLq{ \left( \sum_{k=1}^n a_k \right)^{-1} \sum_{k=1}^n a_k F(\theta_k) -
  \min F }{q} = O\left( \frac{\ln n }{n}\right)
\]
by choosing a fixed stepsize $\gamma_n = \gamma$, a linearly increasing
batch-size $m_n \sim n$ and a uniform weight $a_n =1$.  Note that this is the
rate after $n$ iterations of the Stochastic Proximal Gradient algorithm but
$\sum_{k=1}^n m_k = O(n^2)$ Monte Carlo samples.  Therefore, the rate of
convergence expressed in terms of complexity is $O(\ln n / \sqrt{n})$.

\section{Application to network structure estimation}
\label{sec:ex2}
To illustrate the algorithm we consider the
problem of fitting discrete graphical models in a setting where the number of
nodes in the graph is large compared to the sample size. Let $\Xset$ be a
nonempty finite set, and $p\geq 1$ an integer. We consider a graphical model on
$\Xset^p$ with joint probability mass function
\begin{equation}\label{model}
f_\theta(x_1,\ldots,x_{p})=\frac{1}{Z_\theta}\exp\left\{ \sum_{k=1}^p\theta_{kk}B_0(x_k) + \sum_{1\leq j< k\leq p} \theta_{kj}B(x_k,x_j)\right\},\end{equation}
for a non-zero function $B_0:\;\Xset\to\rset$  and a symmetric non-zero function
$B:\;\Xset\times\Xset\to \rset$. The term $Z_\theta$ is
the normalizing constant of the distribution (the partition function), which cannot (in general) be computed explicitly. The real-valued symmetric matrix
$\theta$ defines the graph structure and is the parameter of interest. It has the same interpretation as the precision matrix in a multivariate Gaussian distribution.

We consider the problem of estimating $\theta$ from  $N$ realizations $\{x^{(i)}, 1 \leq i \leq N\}$ from (\ref{model}) where $x^{(i)}=(x_1^{(i)},\ldots,x^{(i)}_p)\in\Xset^p$, and where  the true value of $\theta$ is assumed sparse. This problem is relevant for instance in biology (\cite{ekebergetal13,kamisettyetal13}), and has been considered by many authors in statistics and machine learning
(\cite{banerjeeetal08,hoefling09,ravikumaretal10,guoetal10,xueetal12}).

The main difficulty in dealing with this model is the fact that the
log-partition function $\log Z_\theta$ is intractable in general.  As a result,
most of the existing works estimate $\theta$ by using the sub-optimal approach
of replacing the likelihood function by a pseudo-likelihood function. One
notable exception that tackles the log-likelihood function is
\cite{hoefling09}, using an active set strategy (to preserve sparsity), and the junction tree
algorithm for computing the partial derivatives of the log-partition function.
However, the success of this strategy depends crucially on the sparsity of the
solution\footnote{Indeed the implementation of their algorithm in the
  \textsf{BMN} package is very sensitive to the sparsity of the solution, and their solver typically fails to converge
  if the regularization parameter is not large enough to produce a sufficiently sparse solution. In our numerical experiments, we were not able to obtain a successful run from their package for $p=100$.}. We will see that Algorithm
\ref{sto_proxalgo} implemented with a MCMC approximation of the gradient gives
a simple and effective approach for computing the penalized maximum likelihood estimate
of $\theta$.

Let $\M_p$ denote the space of $p\times p$ symmetric matrices equipped with
  the (modified) Frobenius inner product
  \[\pscal{\theta}{\vartheta}\eqdef \sum_{1\leq k\leq j\leq p}\theta_{jk}\vartheta_{jk}, \mbox{ with norm }\;\; \|\theta\| \eqdef \sqrt{\pscal{\theta}{\theta}}. \]
Equipped with this norm, $\M_p$ is the same space as the Euclidean space $\rset^d$  where $d=p(p+1)/2$.  Using a $\ell^1$-penalty on $\theta$, we see that the computation of the
  penalized maximum likelihood estimate of $\theta$ is a problem of the form
  (P) with $F = -\ell +g$ where
  \[\ell(\theta) = \frac{1}{N} \sum_{i=1}^N \pscal{\theta}{\bar B(x^{(i)})} - \log Z_\theta \mbox{ and }\;\; g(\theta)=\lambda\sum_{1\leq k \leq j\leq p}|\theta_{jk}| \eqsp;
  \]
  the matrix-valued function $\bar B: \Xset^p \to \rset^{p \times p}$ is
  defined by
  \[
  \bar B_{kk}(x) = B_0(x_k) \qquad \bar B_{kj}(x) = B(x_k, x_j)\eqsp, k \neq j
  \eqsp.
  \]
  It is easy to see that in this example, Problem (P) admits at least one
  solution $\theta_\star$ that satisfies $\lambda\sum_{1\leq k \leq j\leq
    p}|\theta_{jk}| \leq p \log |\Xset|$, where $|\Xset|$ denotes the size of
  $\Xset$.  To see this, note that since $f_\theta(x)$ is a probability,
  $-\ell(\theta) =-N^{-1}\sum_{i=1}^N \log f_\theta(x^{(i)}) \geq 0$.  Hence
  $F(\theta)\geq g(\theta) \to\infty$, as $\sum_{1\leq k \leq j\leq
    p}|\theta_{jk}| \to\infty$ and since $F$ is continuous, we conclude that it
  admits at least one minimizer $\theta_\star$ that satisfies
  $F(\theta_\star)\leq F({\bf 0}) = \log Z_{{\bf 0}} = p\log |\Xset|$. As a
  result, and without any loss of generality, we consider Problem (P) with the
  penalty $g$ replaced by $g(\theta) = \lambda\sum_{1\leq k \leq j\leq
    p}|\theta_{jk}| +\1(\theta)$, where $\1(\theta) = 0$ if $\max_{ij}
  |\theta_{ij}| \leq (p/\lambda)\log |\Xset|$, and $\1(\theta)=+\infty$
  otherwise. Hence in this problem, the domain of $g$ is $\Theta =
  \{\theta\in\M_p:\;\max_{ij} |\theta_{ij}| \leq (p/\lambda)\log |\Xset|\}$.
  
  Upon noting that (\ref{model}) is a canonical exponential model,
  \cite[Section 4.4.2]{shao:2003} shows that $\theta \mapsto -\ell(\theta)$ is
  convex and
  \begin{eqnarray}\label{grad:gibbs}
    \nabla\ell(\theta)=\frac{1}{N}\sum_{i=1}^N \bar B(x^{(i)}) -\int_{\Xset^p} \bar B(z) f_\theta(z) \mu(\rmd z) \eqsp,
  \end{eqnarray}
  where $\mu$ is the counting measure on $\Xset^p$.
  In addition, (see \autoref{app:example:network})
  \begin{equation}\label{eq:propExample}
    \|\nabla\ell(\theta)-\nabla\ell(\vartheta)\| \leq p \left((p-1) \textsf{osc}^2(B) + \textsf{osc}^2(B_0)   \right)\|\theta-\vartheta\|,\end{equation}
  where for a function $\tilde B: \Xset \times \Xset \to \rset$, $\textsf{osc}(\tilde B)=\sup_{x,y,u,v\in\Xset}|\tilde B(x,y)- \tilde B(u,v)|$.

 Therefore, in this example, the assumption \autoref{A1} and
  \autoref{A1:compl} are satisfied.

The representation of the gradient in (\ref{grad:gibbs}) shows that H\ref{A2} holds, with $\pi_\theta(\rmd z) = f_\theta(z) \mu(\rmd z)$, and $H_\theta(z) = N^{-1}\sum_{i=1}^N \bar B(x^{(i)}) -\bar B(z)$. Direct simulation from the distribution $f_\theta$ is rarely feasible, so we turn to MCMC.  These Markov kernels are easy to construct, and can be constructed in many ways. For instance if the set $\Xset$ is not too large, then a Gibbs sampler (see e.g. \cite{casella:robert:2005}) that samples from the full conditional distributions of $f_\theta$ can be easily implemented. In the case of the Gibbs sampler, since $\Xset^p$ is a finite set,  $\Theta$ is compact, $f_\theta(x)>0$ for all $(x,\theta)\in \Xset^p\times \Theta$, and, $\theta\mapsto f_\theta(x)$ is continuously differentiable, the assumptions  H\ref{hyp:MonteCarloSamples}, H\ref{hyp:UniformErgo} and H\ref{hyp:AS:withbias}(i)-(ii) automatically hold with $W\equiv 1$. We should point out that the Gibbs sampler is a generic algorithm that in some cases is known to mix poorly. Whenever possible we recommend the use of specialized problem-specific MCMC algorithms with better mixing properties. 

\paragraph{Illustrative example}
We consider the particular case where $\Xset=\{1,\ldots,M\}$, \pma{$B_0(x)=0$}, and
$B(x,y)=\1_{\{x=y\}}$, which corresponds to the well known Potts model. We
report in this section some simulation results showing the performances of the
stochastic proximal gradient algorithm. We use $M=20$, $B_0(x)=x$, $N=250$ and
for $p\in\{50,100,200\}$. We generate the ``true" matrix $\theta_\true$ such
that it has on average $p$ non-zero elements below the diagonal which are
simulated from a uniform distribution on $\ooint{-4,-1}\cup \ooint{1,4}$. All
the diagonal elements are set to $0$.

By trial-and-error we set the regularization parameter to
$\lambda=2.5\sqrt{\log(p)/n}$ for all the simulations.  We implement
\autoref{sto_proxalgo}, drawing samples from a Gibbs sampler to approximate the
gradient.  We compare the following two versions of \autoref{sto_proxalgo}:
\begin{enumerate}
\item \textsf{Solver 1}: A version with a fixed Monte Carlo batch size $m_n=500$, and decreasing
  step size $\gamma_n = \frac{25}{p}\frac{1}{n^{0.7}}$.
\item \textsf{Solver 2}: A version with increasing Monte Carlo batch size $m_n=500 + n^{1.2}$,
  and fixed step size $\gamma_n = \frac{25}{p}\frac{1}{\sqrt{50}}$.
\end{enumerate}
We run \textsf{Solver 2} for $\textsf{Niter}=5p$ iterations, where $p\in\{50,100,200\}$ is as above. And we set the number of iterations of \textsf{Solver} 1 so that both solvers draw approximately the same number of Monte Carlo samples. \pma{For stability in the results, we repeat the solvers $30$ times and average the sample paths. We evaluate the convergence of each solver by computing the relative error $\|\theta_n-\theta_\infty\| / \|\theta_\infty\|$, along the iterations, where $\theta_\infty$ denotes the value returned by the solver on its last iteration. Note that we compare the optimizer output to $\theta_\infty$, not $\theta_\true$. Ideally, we would like to compare the iterates to the solution of the optimization problem. However in the present setting a solution is not available in closed form (and there could be more than one solution). Furthermore, whether the solution of the optimization problem  approaches $\theta_\star$ is a complicated statistical problem\footnote{this depends heavily on $n$, $p$, the actual true matrix $\theta_\true$, and depends also heavily the choice of the regularization parameter $\lambda$} that is beyond the scope of this work.  The relative errors are presented on \autoref{fig:errors} and suggest that, when measured as function of resource used, \textsf{Solver 1} and \textsf{Solver 2} have roughly the same convergence rate. }

We also compute the statistic $\textsf{F}_n \eqdef
\frac{2\textsf{Sen}_n\textsf{Prec}_n}{\textsf{Sen}_n +\textsf{Prec}_n}$ which
measures the recovery of the sparsity structure of $\theta_{\infty}$ along the
iteration. In this definition $\textsf{Sen}_n$ is the sensitivity, and
$\textsf{Prec}_n$ is the precision defined as
\begin{multline*}
\textsf{Sen}_n=\frac{\sum_{j< i}\1_{\{|\theta_{n,ij}|>0\}}\1_{\{|\theta_{\infty,ij}|>0\}}}{\sum_{j< i}\1_{\{|\theta_{\infty,ij}|>0\}}},\;\;\mbox{ and }\;\;\textsf{Prec}_n=\frac{\sum_{j< i}\1_{\{|\theta_{n,ij}|>0\}}\1_{\{|\theta_{\infty,ij}|>0\}}}{\sum_{j< i}\1_{\{|\theta_{\infty,ij}|>0\}}}.\end{multline*}
The values of $\textsf{F}_n$ are presented on \autoref{fig:recovery} as function of computing time. It shows that for both solvers, the sparsity structure of $\theta_n$ converges very quickly towards that of $\theta_\infty$. \pma{We note also that \autoref{fig:recovery} seems to suggest that \textsf{Solver 2} tends to produce solutions with slightly more stable sparsity structure than \textsf{Solver 1} (less variance on the red curves). Whether such subtle differences exist between the two algorithms (a diminishing step-size and fixed Monte Carlo size versus a fixed step-size and increasing Monte Carlo size) is an interest question. Our analysis does not deal with the sparsity structure of the solutions, hence cannot offer any explanation.}

\begin{figure}
\includegraphics[width=0.9\textwidth]{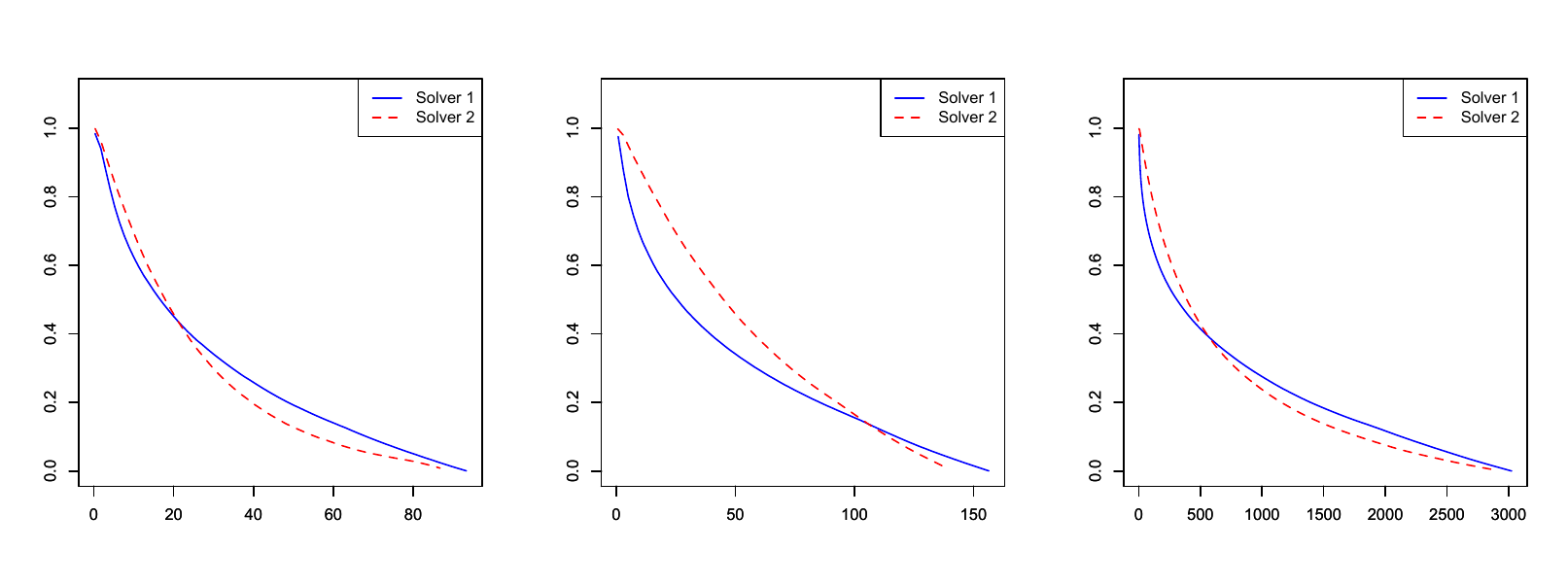}
\caption{Relative errors plotted as function of computing time for \textsf{Solver 1} and \textsf{Solver 2}. }
\label{fig:errors}
\end{figure}

\begin{figure}
\includegraphics[width=0.9\textwidth]{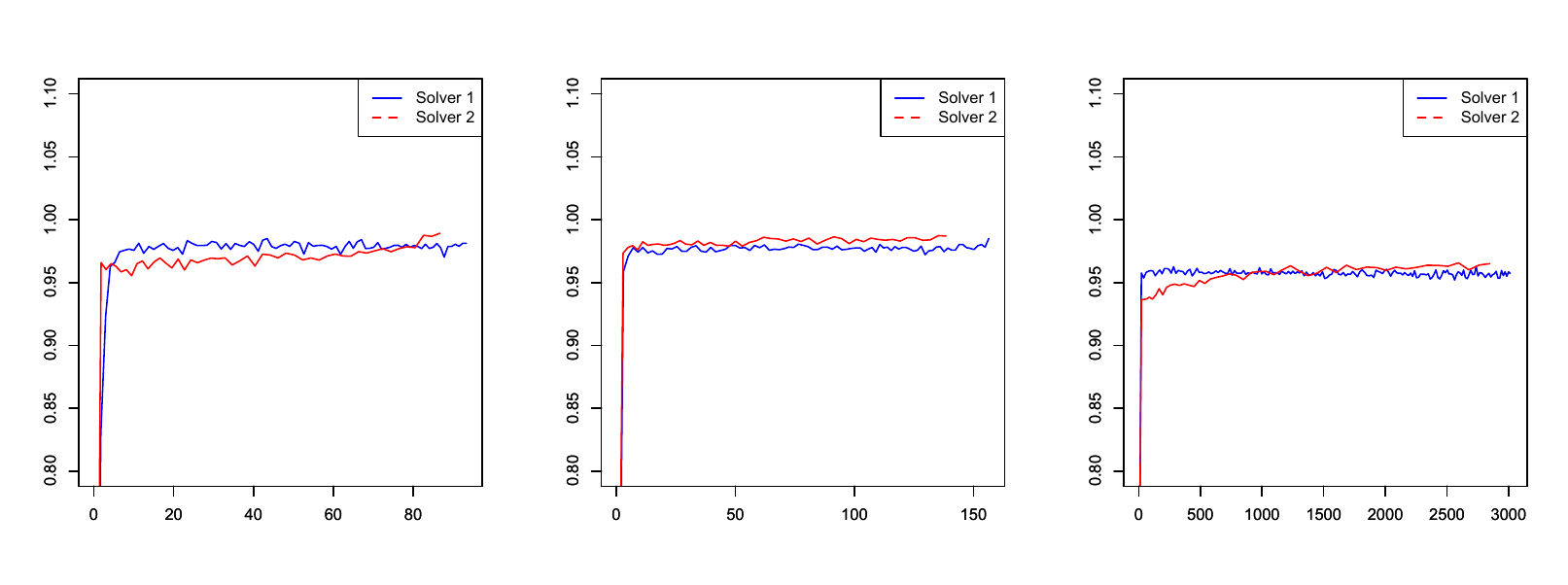}
\caption{Statistic $\textsf{F}_n$ plotted as function of computing time for \textsf{Solver 1} and \textsf{Solver 2}.}
\label{fig:recovery}
\end{figure}

\section{A non convex example: High-dimensional logistic regression with random effects}
\label{sec:ex1}
 We numerically investigate the extension of our results to a
 situation where the assumptions \autoref{A1:compl} and \autoref{A2}
 hold but \autoref{A1} is not in general satisfied \pma{and the domain
   $\Theta$ is not bounded}. The numerical study below shows that the
 conclusions reached in \autoref{sec:stochastic-proximal-gradient} and
 \autoref{sec:MonteCarloProxGdt} provide useful information to tune
 the design parameters of the algorithms.

 \subsection{The model}  We model binary responses $\{ Y_{i} \}_{i=1}^N \in \{0,1\}$  as $N$ conditionally independent realizations of a
  random effect logistic regression model,
  \begin{equation}\label{randeffect_model}
    Y_{i}\vert \bU \stackrel{ind.}{\sim} \textsf{Ber}\left(s(x_i'\beta +\sigma z_i'\bU)\right),\;\;1\leq i\leq N \eqsp,
  \end{equation}
  where $x_i\in\rset^{p}$ is the vector of covariates, $z_i\in\rset^{q}$ are (known) loading vector,
  $\textsf{Ber}(\alpha)$ denotes the Bernoulli distribution with parameter
  $\alpha \in \ccint{0,1}$, $s(x)=\rme^{x}/(1+\rme^{x})$ is the cumulative
  distribution function of the standard logistic distribution.
  The random effect $\bU$ is assumed to be standard Gaussian $\bU \sim \gauss_q(0,I)$.
  
  The log-likelihood of the observations at $\theta = (\beta, \sigma) \in
  \rset^p \times \ooint{0, \infty}$ is given by
  \begin{equation}
    \label{eq:fun_ex:LogLikeli}
    \ell(\theta)=\log \int \prod_{i=1}^N s(x_i'\beta + \sigma z_i'\bu)^{Y_{i}}\left(1-s(x_i'\beta + \sigma z_i'\bu)\right)^{1-Y_{i}}\phi(\bu) \rmd \bu \eqsp, \\
  \end{equation}
  where $\phi$ is the density of a $\rset^q$-valued standard Gaussian random
  vector.  The number of covariates $p$ is possibly larger than $N$, but only a
  very small number of these covariates are relevant which suggests to use the
  elastic-net penalty
  \begin{equation}
    \label{eq:elasticnet}
    \lambda \left(\frac{1-\alpha}{2} \| \beta\|_2^2 + \alpha \|\beta \|_1 \right)  \eqsp,
  \end{equation}
  where $\lambda >0$ is the regularization parameter, $\| \beta \|_r =
  (\sum_{i=1}^p |\beta_i|^r)^{1/r}$ and $\alpha \in \ccint{0,1}$ controls the
  trade-off between the $\ell^1$ and the $\ell^2$ penalties.  In this example,
  \begin{equation}
    \label{eq:ex_fun:penalty}
    g(\theta) = \lambda\left(\frac{1-\alpha}{2}\|\beta\|_2^2
      +\alpha\|\beta\|_1\right) +   \1_{\ooint{0, +\infty}}(\sigma) \eqsp,
  \end{equation}
  where $\1_A(x) = +\infty$ is $x \notin A$ and $0$ otherwise.  Define the
  conditional log-likelihood of $\bY=(Y_1,\dots,Y_N)$ given $\bU$ (the
  dependence upon $\bY$ is omitted) by
  \[
  \ell_c(\theta \vert\bu) = \sum_{i=1}^N \left\{ Y_i \left( x_i' \beta + \sigma
      z_i'\bu \right) - \ln \left( 1+\exp\left( x_i'\beta + \sigma
        z_i'\bu\right)\right) \right\} \ \eqsp,
  \]
  and the conditional distribution of the random effect $\bU$ given the observations
  $\bY$ and the parameter $\theta$
  \begin{equation}
    \label{eq:posterior-density-random-effect}
    \pi_{\theta}(\bu)= \exp\left(\ell_c(\theta \vert \bu) - \ell(\theta) \right) \phi(\bu) \eqsp.
  \end{equation}
  The Fisher identity implies that the gradient of the
  log-likelihood~(\ref{eq:fun_ex:LogLikeli}) is given by
  \begin{align*}
    \nabla \ell(\theta)    &= \int \nabla_\theta \ell_c(\theta \vert \bu) \ \pi_\theta(\bu) \ \rmd \bu = \int\left\{\sum_{i=1}^N (Y_{i}-s(x_i'\beta+ \sigma z_i'\bu))
      \begin{bmatrix}
        x_i \\
        z_i' \bu
      \end{bmatrix}
    \right\}\pi_{\theta}(\bu) \ \rmd \bu \eqsp.
  \end{align*}
  The Hessian of the
  log-likelihood $\ell$ is given by (see e.g.\cite[Chapter 3]{Mclachlan:2008})
  \[
  \nabla^2 \ell(\theta) = \PE_{\pi_\theta}\left[ \nabla^2_\theta \ell_c(\theta
    \vert \bU) \right] + \mathrm{Cov}_{\pi_\theta}\left( \nabla_\theta
    \ell_c(\theta \vert \bU) \right)
  \]
  where $\PE_{\pi_\theta}$ and $\mathrm{Cov}_{\pi_\theta}$ denotes  the
  expectation and the covariance \wrt\ the distribution $\pi_\theta$, respectively.
  Since
  \[
  \nabla_\theta^2 \ell_c(\theta \vert \bu)= - \sum_{i=1}^N s(x_i' \beta + \sigma
  z_i' \bu) \, \left(1-s(x_i' \beta + \sigma z_i' \bu) \right) \begin{bmatrix}
    x_i \\
    z_i' \bu
  \end{bmatrix} \begin{bmatrix}
    x_i \\
    z_i' \bu
  \end{bmatrix}' \eqsp,
  \]
  and $\sup_{\theta \in \Theta} \int \|\bu\|^2 \pi_\theta(\bu) \, \rmd \bu <
  \infty$ (see \autoref{app:ex_fun}), $\nabla^2 \ell(\theta)$ is bounded on
  $\Theta$. Hence, $\nabla \ell(\theta)$ satisfies the Lipschitz condition showing that~\autoref{A1}
  is satisfied.

 \subsection{Numerical application}
 The assumption \pma{\autoref{A2}} is satisfied with $\pi_\theta$ given by
 (\ref{eq:posterior-density-random-effect}) and
\begin{equation}\label{eq:example2:defi:Htheta}
  H_\theta(\bu) =-\sum_{i=1}^N (Y_{i}-F(x_i'\beta+ \sigma z_i'\bu))
  \begin{bmatrix}
    x_i \\
    z_i' \bu
  \end{bmatrix} \eqsp.
\end{equation}
The distribution $\pi_\theta$ is sampled using the MCMC sampler proposed in
\cite{polson:scott:windle:2012} based on data-augmentation.  We write $-\nabla\ell(\theta) = \int_{\rset^q \times
  \rset^N} H_\theta(\bu) \tpi_\theta(\bu,\bw) \ \rmd \bu \rmd \bw$ where
$\tpi_\theta(\bu, \bw)$ is defined for $\bu \in \rset^q$ and $\bw = (w_1,
\cdots, w_N) \in \rset^N$ by
\[
\tpi_\theta(\bu, \bw) = \left(\prod_{i=1}^N \bpi_{\pg}\left(w_i; x_i'\beta+
    \sigma z_i'\bu \right)\right) \pi_\theta(\bu) \eqsp;
\]
in this expression, $\bpi_\pg(\cdot;c)$ is the density of the Polya-Gamma
distribution on the positive real line with parameter $c$ given by
\[
\bpi_\pg(w;c) = \cosh(c/2) \, \exp\left( -w c^2/2 \right) \, \rho(w) \un_{\rset^+}(w)  \eqsp,
\]
where $\rho(w) \propto \sum_{k \geq 0} (-1)^k (2k+1) \exp(-(2k+1)^2/(8w))
w^{-3/2}$ (see \cite[Section~3.1]{biane:pitman:yor:2001}).  Thus, we have
\[
\tpi_\theta(\bu,\bw) = C_\theta \phi(\bu) \prod_{i=1}^N  \exp\left( \sigma (Y_i-1/2)  z_i' \bu  -w_i (x_i'\beta+ \sigma z_i'\bu)^2/2 \right) \, \rho(w_i) \un_{\rset^+}(w_i)  \eqsp,
\]
where $ \ln C_\theta = -N \ln 2 -\ell(\theta) +\sum_{i=1}^N (Y_i-1/2) x'_i
\beta$.  This target distribution can be sampled using a Gibbs algorithm: given
the current value $(\bu^t, \bw^t)$ of the chain, the next point is obtained by
sampling $\bu^{t+1}$ under the conditional distribution of $\bu$ given $\bw^t$,
and $\bw^{t+1}$ under the conditional distribution of $\bw$ given $\bu^{t+1}$.
In the present case, these conditional distributions are given respectively by
\[
  \tpi_\theta(\bu \vert \bw)  \equiv \gauss_q\left( \mu_\theta(\bw) ; \Gamma_\theta(\bw) \right)  \qquad
  \tpi_\theta(\bw \vert \bu)  = \prod_{i=1}^N \bpi_\pg(w_i; |x_i'\beta +
  \sigma z_i'\bu|)
\]
with
\begin{equation}\label{eq:example2:defi:MuVar}
  \Gamma_\theta(\bw) = \left(I + \sigma^2 \sum_{i=1}^N w_i z_i z_i' \right)^{-1} \eqsp, \qquad \mu_\theta(\bw) = \sigma \Gamma_\theta(\bw)  \sum_{i=1}^N \left( (Y_i-1/2) - w_i x_i' \beta \right) z_i \eqsp.
\end{equation}
Exact samples of these conditional distributions can be obtained (see
\cite[Algorithm 1]{polson:scott:windle:2012} for sampling under a Polya-Gamma
distribution).
It has been shown by \cite{choi:hobert:2013} that the Polya-Gamma Gibbs sampler is uniformly ergodic. 
Hence \autoref{hyp:UniformErgo} is satisfied with $W \equiv 1$. 
Checking \autoref{hyp:AS:withbias} is also straightforward.

We test the algorithms with $N=500$, $p=1,000$ and $q=5$.  We generate the
$N\times p$ covariates matrix $X$ columnwise, by sampling a stationary
$\rset^N$-valued autoregressive model with parameter $\rho =0.8$ and Gaussian
noise $\sqrt{1-\rho^2} \, \N_N(0, I)$.  We generate the vector of regressors
$\beta_\true$ from the uniform distribution on $[1,5]$ and randomly set $98 \%$
of the coefficients to zero. The variance of the random effect is set to $\sigma^2=0.1$.  We
consider a repeated measurement setting so that $z_i = e_{\lceil i q /N
  \rceil}$ where $\{e_j, j \leq q \}$ is the canonical basis of $\rset^q$ and
$\lceil \cdot \rceil$ denotes the upper integer part.  With such a simple
expression for the random effect, we will be able to approximate the value
$F(\theta)$ in order to illustrate the theoretical results obtained in this
paper.  We use the Lasso penalty ($\alpha=1$ in (\ref{eq:elasticnet})) with
$\lambda=30$.

We first illustrate the ability of Monte Carlo Proximal Gradient algorithms to
find a minimizer of $F$.  We compare the Monte Carlo proximal gradient algorithm
\begin{enumerate}[label=(\roman*)]
\item with fixed batch size: $\gamma_n = 0.01/\sqrt{n}$ and $m_n = 275$
  (Algo~1); $\gamma_n = 0.5/n$ and $m_n = 275$ (Algo~2).
\item with increasing batch size: $\gamma_n = \gamma =0.005$, $m_n = 200+n$
  (Algo~3); $\gamma_n = \gamma =0.001$, $m_n = 200+n$ (Algo~4); and $\gamma_n =
  0.05/\sqrt{n}$ and $m_n = 270+\lceil\sqrt{n}\rceil$ (Algo~5).
\end{enumerate}
Each algorithm is run for $150$ iterations. The batch sizes $\{m_n,n \geq 0\}$
are chosen so that after $150$ iterations, each algorithm used approximately
the same number of Monte Carlo samples.  We denote by $\beta_\infty$ the value
obtained at iteration $150$. A path of the relative error $\|\beta_n -
\beta_\infty\|/\|\beta_\infty \|$ is displayed on
\autoref{fig:RelError:Beta}[right] for each algorithm; a path of the
sensitivity $\mathsf{Sen}_n$ and of the precision $\mathsf{Prec}_n$ (see
\autoref{sec:ex2} for the definition) are displayed on
\autoref{fig:SensitivityPrecision:Beta}.  All these sequences are plotted
versus the total number of Monte Carlo samples up to iteration $n$.  These
plots show that with a fixed batch-size (Algo~1 or Algo~2), the best
convergence is obtained with a step size decreasing as $O(1/\sqrt{n})$; and for
an increasing batch size (Algo~3 to Algo~5), it is better to choose a fixed
step size. These findings are consistent with the results in
\autoref{sec:MonteCarloProxGdt}.  On \autoref{fig:RelError:Beta}[left], we
report on the bottom row the indices $j$ such that $\beta_{\true,j}$ is non
null and on the rows above, the indices $j$ such that $\beta_{\infty,j}$ given
by Algo~1 to Algo~5 is non null.

\begin{figure}  \includegraphics[width=6cm]{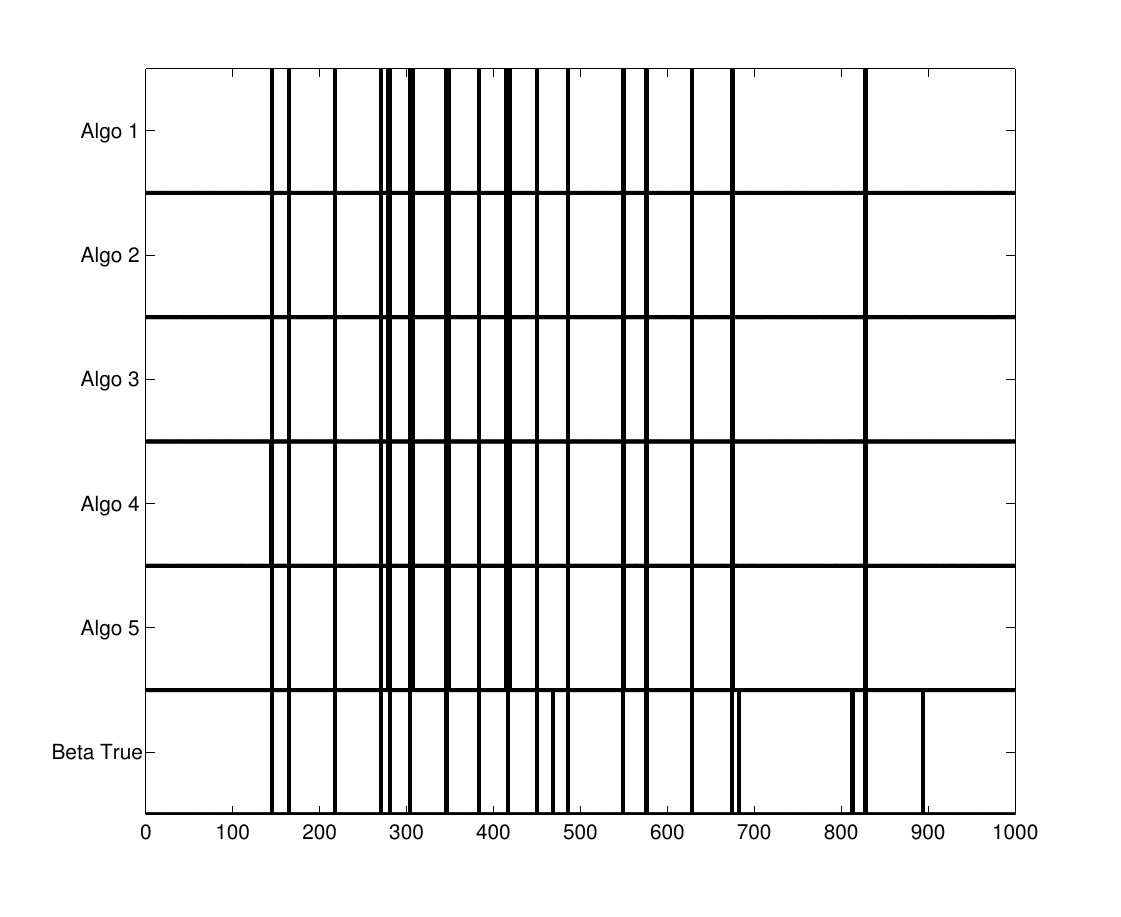} \includegraphics[width=6cm]{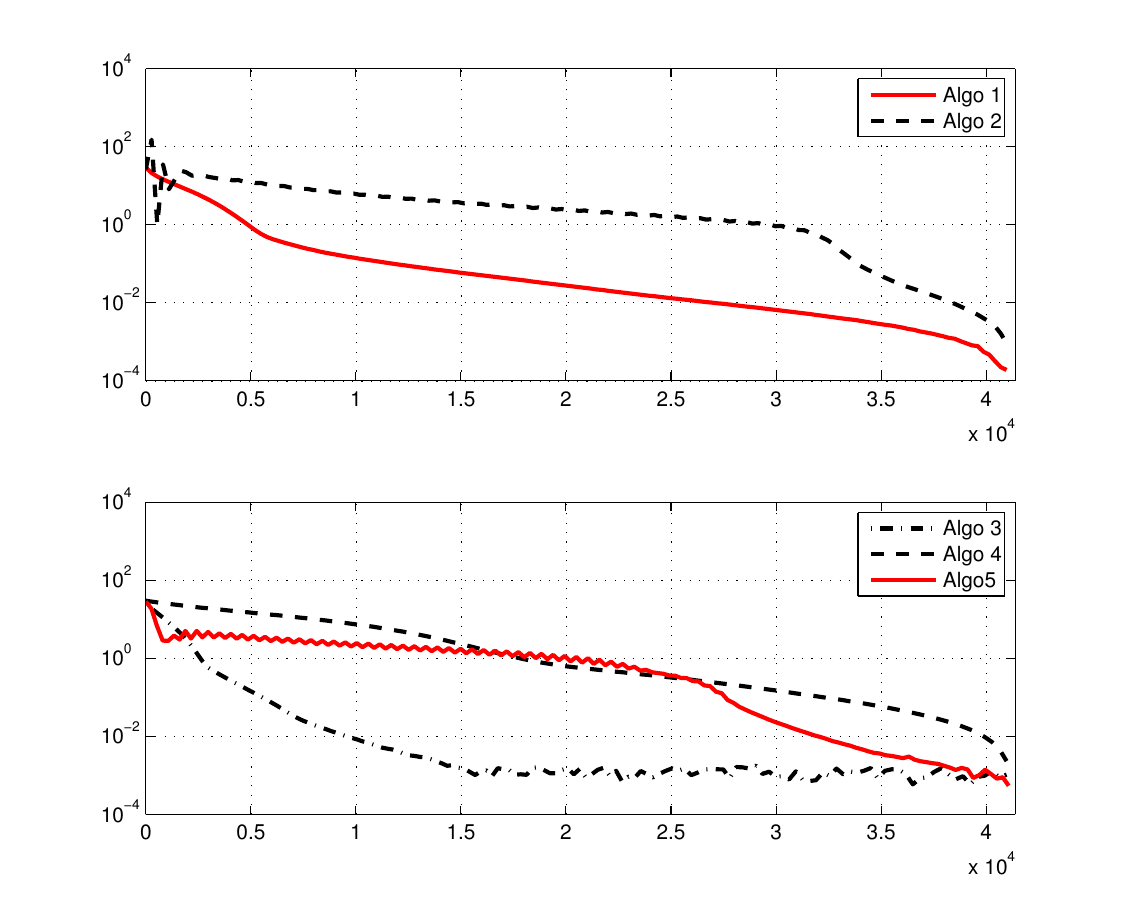}
 \caption{[left] The support of the sparse vector $\beta_\infty$
    obtained by Algo~1 to Algo~5; for comparison, the support of $\beta_\true$ is on the bottom row.
    [right] Relative error along one path of each algorithm as a function of
    the total number of Monte Carlo samples.}
  \label{fig:RelError:Beta}
\end{figure}

\begin{figure}
  \includegraphics[width=6cm]{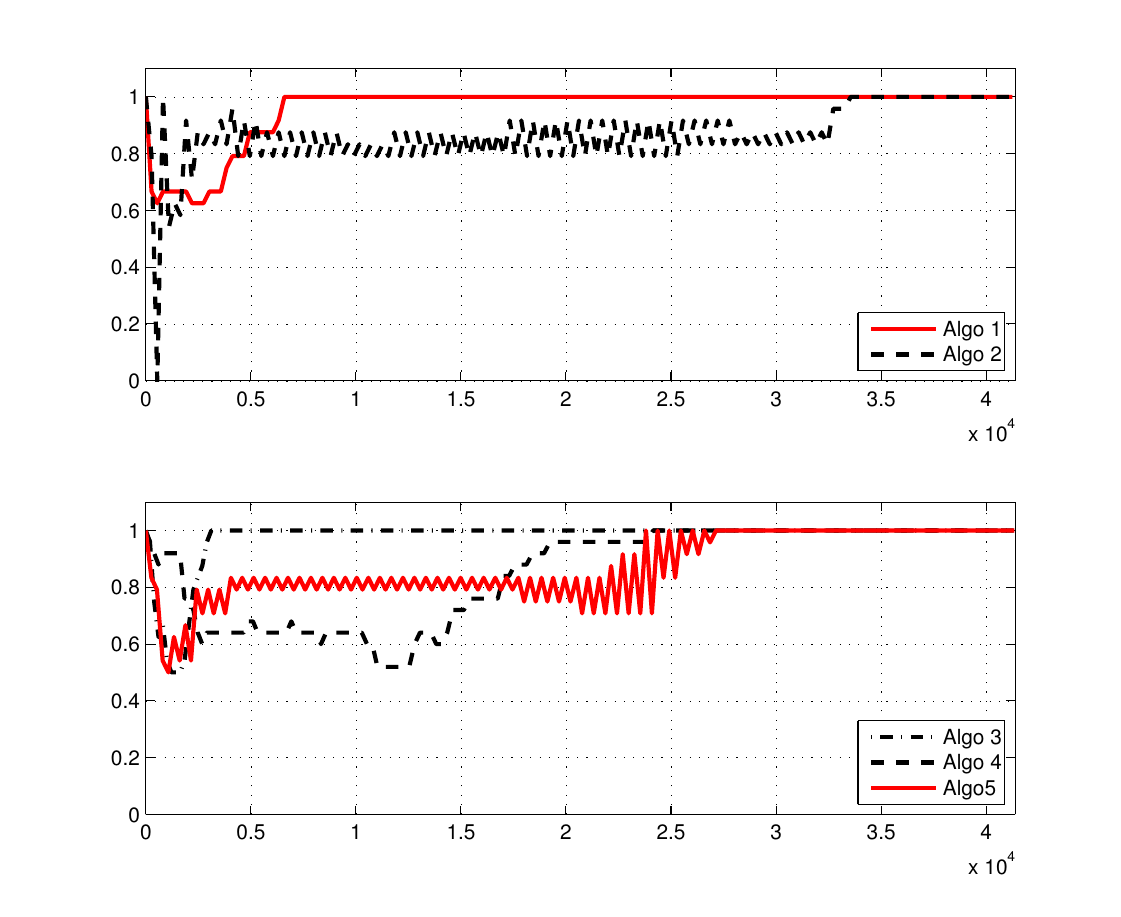} \includegraphics[width=6cm]{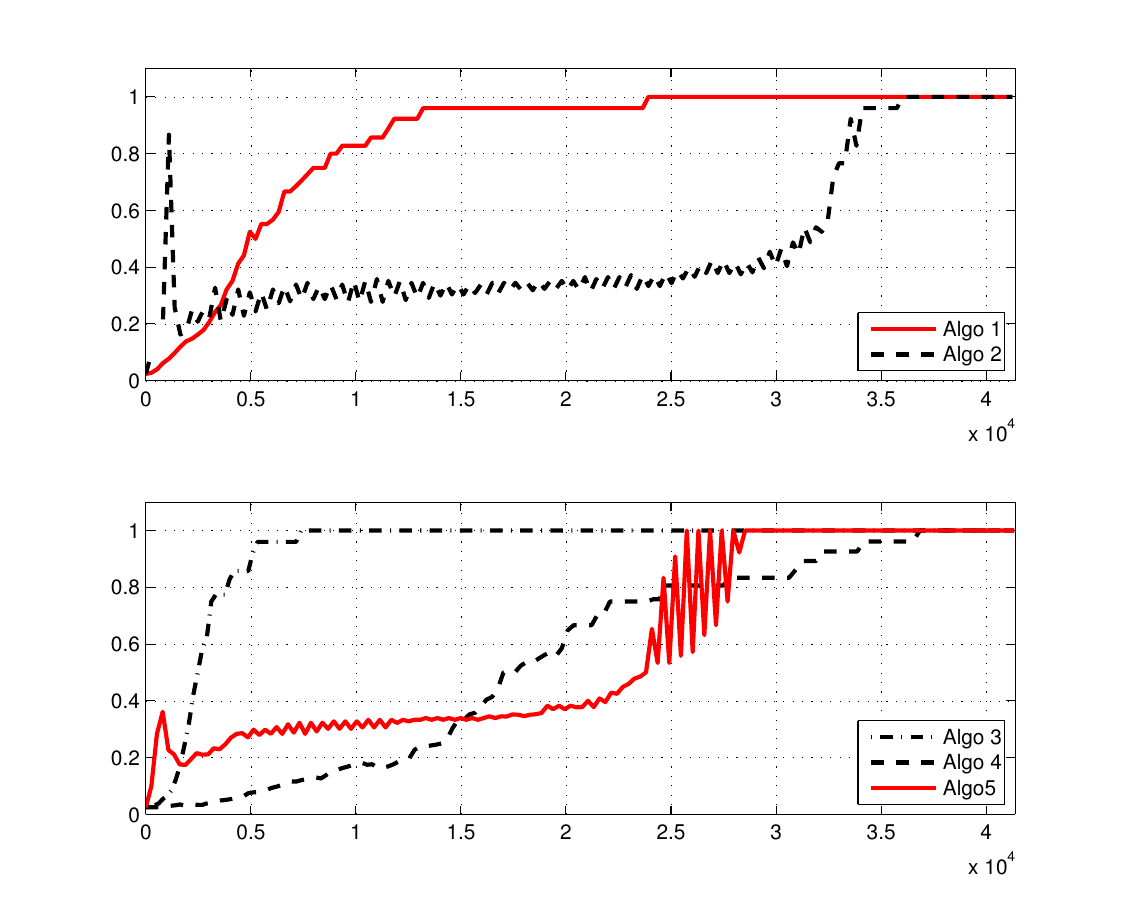}
  \caption{ The sensitivity $\mathsf{Sen}_n$ [left]  and the precision $\mathsf{Prec}_n$ [right] along a path, versus the total
    number of Monte Carlo samples up to time $n$}
  \label{fig:SensitivityPrecision:Beta}
\end{figure}
We now study the convergence of $\{F(\theta_n), n \in \nset \}$ where
$\theta_n$ is obtained by one of the algorithms described above.  We repeat
$50$ independent runs for each algorithm and estimate $\PE\left[F(\theta_n)
\right]$ by the empirical mean over these runs. On \autoref{fig:CvgF}[left], $n
\mapsto F(\theta_n)$ is displayed for several runs of Algo~1 and Algo~3.  The
figure shows that all the paths have the same limiting value, which is
approximately $F_\star=311$; we observed the same behavior on the $50$ runs of
each algorithm. On \autoref{fig:CvgF}[right], we report the Monte Carlo
estimation of $\PE[F(\theta_n)]$ versus the total number of Monte Carlo samples
used up to iteration $n$ for the best strategies in the fixed batch size case
(Algo~1) and in the increasing batch size case (Algo~3 and Algo~4).
\begin{figure}  \includegraphics[width=6cm]{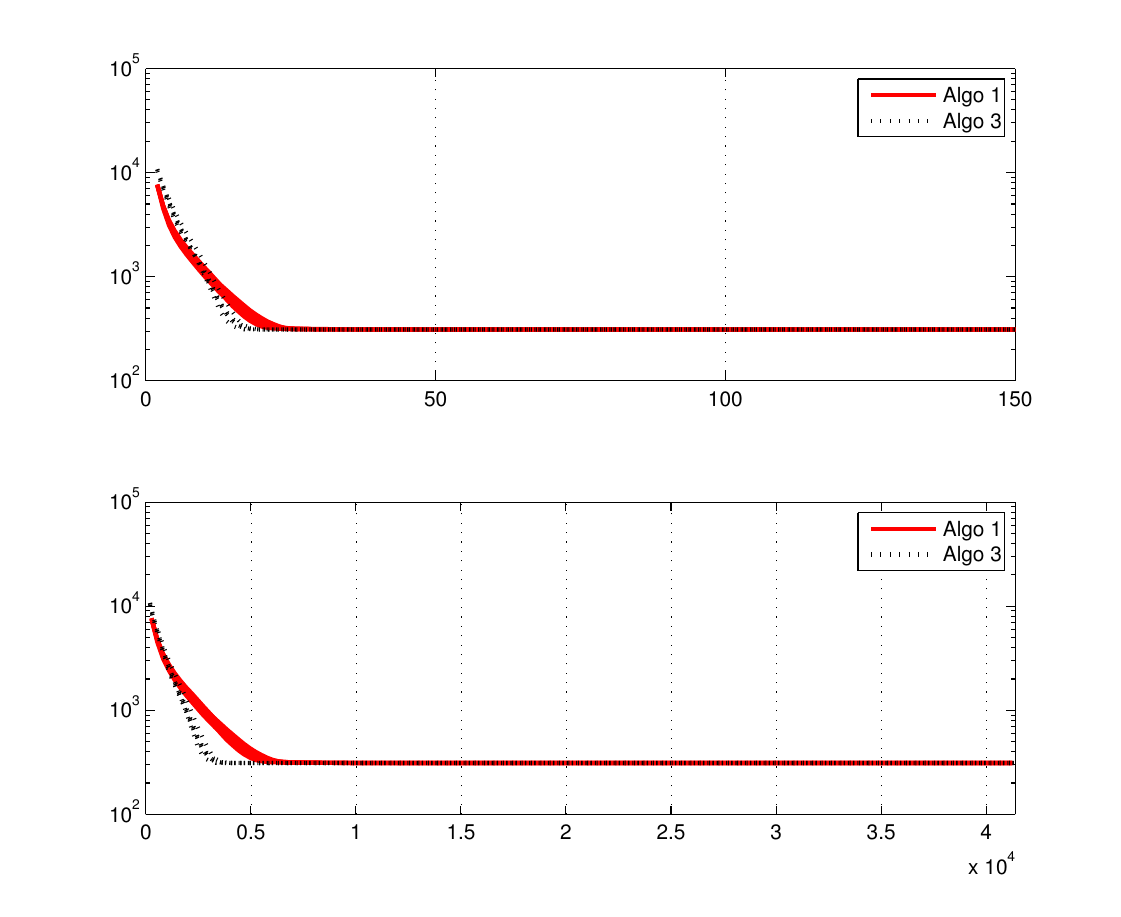} \includegraphics[width=6cm]{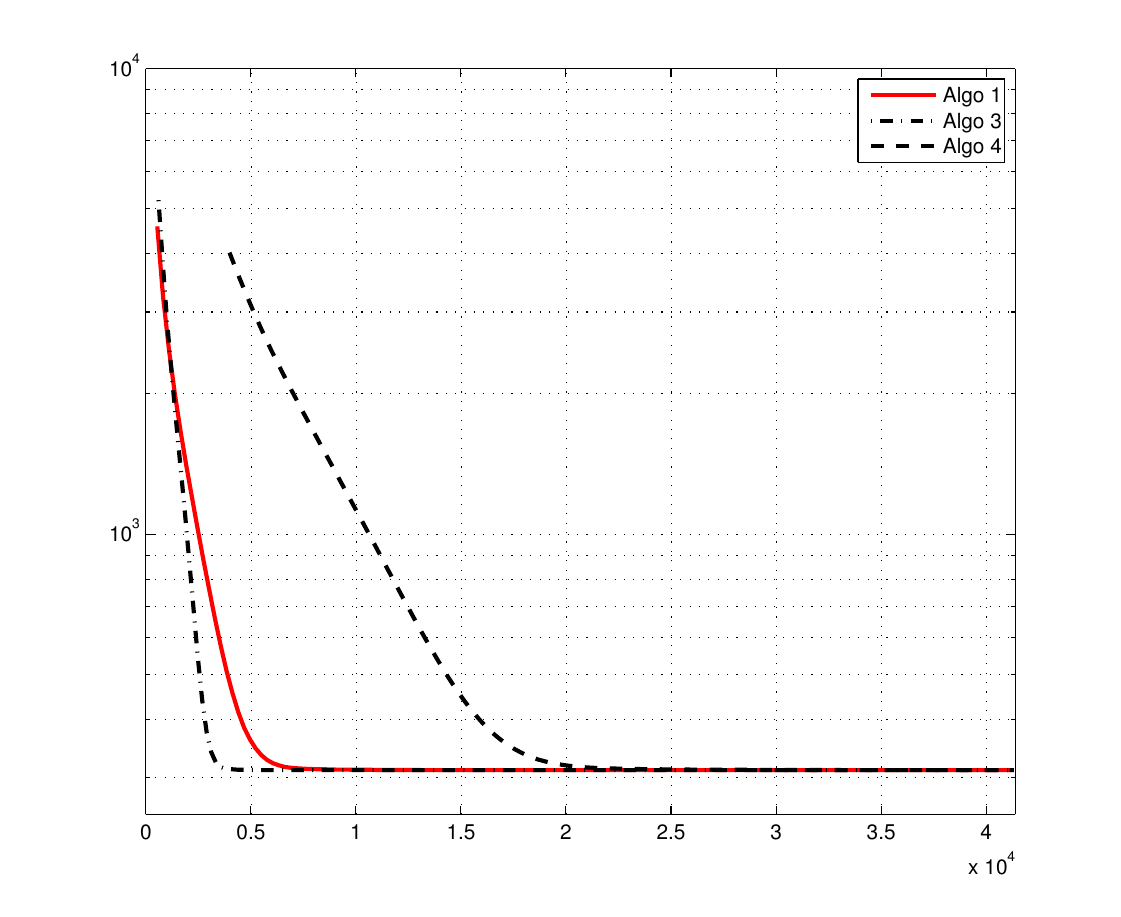}
   \caption{[left] $n \mapsto F(\theta_n)$ for several independent
     runs. [right] $\PE\left[F(\theta_n)\right]$ versus the total number of
     Monte Carlo samples up to iteration $n$}
  \label{fig:CvgF}
\end{figure}

\section{Proofs}
\label{sec:proofs}
\subsection{Preliminary lemmas}
\begin{lemma}\label{lem:prox}  Assume that $g$ is lower semi-continuous and convex.
  For $\theta,\theta'\in\Theta$ and $\gamma>0$
  \begin{equation}
    \label{eq:optcond}
    g\Big(\Prox_{\gamma,g}(\theta) \Big)-g(\theta')\leq - \frac{1}{\gamma}\pscal{\Prox_{\gamma,g}(\theta) - \theta'}{\Prox_{\gamma, g}(\theta)-\theta} \eqsp.
  \end{equation}
  For any $\gamma>0$ and for any $\theta,\theta'\in\Theta$,
  \begin{equation}
    \label{eq:Proximal-Liphscitz}
    \|\Prox_{\gamma,g}(\theta) - \Prox_{\gamma,g}(\theta') \|^2 + \| \big(\Prox_{\gamma,g}(\theta) - \theta  \big)- \big( \Prox_{\gamma,g}(\theta') - \theta' \big)\|^2 \leq \|\theta - \theta' \|^2 \eqsp.
  \end{equation}
\end{lemma}
\begin{proof}
  See \cite[Propositions~4.2., 12.26 and ~12.27]{bauschke:combettes:2011}.
\end{proof}

\begin{lemma}\label{lem:key}
  Assume \autoref{A1}  and let $\gamma \in \ocint{0,1/L}$. Then for all $\theta,\theta'\in\Theta$,
  \begin{equation}\label{eq:key2}
    - 2 \gamma \Big( F(\Prox_{\gamma,g}(\theta))-F(\theta') \Big)
    \geq \|\Prox_{\gamma,g}(\theta)-\theta'\|^2 + 2 \pscal{\Prox_{\gamma,g}(\theta)-\theta'}{\theta'-\gamma\nabla f(\theta')-\theta} \eqsp.
  \end{equation}
  If in addition $f$ is convex, then for all $\theta,\theta',\xi\in\Theta$,
  \begin{multline}
    \label{eq:key1}
    -2 \gamma \Big( F\big(\Prox_{\gamma,g}(\theta) \big) - F(\theta') \Big) \geq \|\Prox_{\gamma,g}(\theta) -
    \theta'\|^2 \\
    + 2 \pscal{\Prox_{\gamma,g}(\theta)-\theta'}{\xi - \gamma\nabla f(\xi)-\theta}
    - \|\theta'-\xi\|^2 \eqsp.
  \end{multline}
\end{lemma}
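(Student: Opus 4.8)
The plan is to write $p \eqdef \Prox_\gamma(\theta)$ and combine two ingredients. The first is the first-order optimality inequality \eqref{eq:optcond} of \autoref{lem:prox}: with $\vartheta$ playing the role of its second argument, it bounds $\gamma\{g(p)-g(\vartheta)\}$ by $-\pscal{p-\vartheta}{p-\theta}$. The second is the descent (majorization) inequality implied by \autoref{A1} together with $\gamma\in\ocint{0,1/L}$: for all $u,v\in\Theta$,
\[
f(u)\leq f(v)+\pscal{\nabla f(v)}{u-v}+\frac{1}{2\gamma}\|u-v\|^2\eqsp,
\]
which is just the inequality $F\leq Q_\gamma(\cdot\,\vert\,v)$ recorded after \eqref{eq:definition:Qgamma} with the $g$-terms stripped off.

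For \eqref{eq:key2} I would apply the descent inequality with $u=p$, $v=\vartheta$, multiply by $\gamma$, add $\gamma$ times \eqref{eq:optcond}, and get
\[
\gamma\left(F(p)-F(\vartheta)\right)\leq \gamma\pscal{\nabla f(\vartheta)}{p-\vartheta}+\frac{1}{2}\|p-\vartheta\|^2-\pscal{p-\vartheta}{p-\theta}\eqsp.
\]
Multiplying by $-2$ and using $2\pscal{p-\vartheta}{p-\theta}=2\|p-\vartheta\|^2+2\pscal{p-\vartheta}{\vartheta-\theta}$ converts the right-hand side into $\|p-\vartheta\|^2+2\pscal{p-\vartheta}{\vartheta-\theta}-2\gamma\pscal{\nabla f(\vartheta)}{p-\vartheta}$; grouping the linear terms as $2\pscal{p-\vartheta}{\vartheta-\gamma\nabla f(\vartheta)-\theta}$ yields \eqref{eq:key2}.

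For \eqref{eq:key1} the only change is to bring in the third point $\xi$: apply the descent inequality with $u=p$, $v=\xi$, and use convexity of $f$ in the form $f(\xi)\leq f(\vartheta)+\pscal{\nabla f(\xi)}{\xi-\vartheta}$. Summing these two collapses the gradient contributions to $\gamma\pscal{\nabla f(\xi)}{p-\vartheta}$ and leaves $\tfrac12\|p-\xi\|^2$. Adding $\gamma$ times \eqref{eq:optcond} and multiplying by $-2$ as before, it remains to expand $\|p-\xi\|^2=\|p-\vartheta\|^2+2\pscal{p-\vartheta}{\vartheta-\xi}+\|\vartheta-\xi\|^2$ together with $2\pscal{p-\vartheta}{p-\theta}$; collecting terms gives $\|p-\vartheta\|^2+2\pscal{p-\vartheta}{\xi-\gamma\nabla f(\xi)-\theta}-\|\vartheta-\xi\|^2$.

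The argument is entirely elementary, so there is no real obstacle; the only thing requiring care is the bookkeeping of the inner-product expansions — in particular checking that the $-\|p-\vartheta\|^2$ produced by the descent inequality cancels against one copy of the $+2\|p-\vartheta\|^2$ coming from $2\pscal{p-\vartheta}{p-\theta}$, leaving exactly $+\|p-\vartheta\|^2$. Note that $\gamma\leq 1/L$ enters only through the majorization inequality, and convexity of $f$ is used only in the second part, precisely where $\nabla f(\vartheta)$ must be replaced by $\nabla f(\xi)$.
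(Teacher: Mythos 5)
Your proof is correct and follows essentially the same route as the paper: the descent inequality \eqref{eq:step:key2} from \autoref{A1} with $\gamma\leq 1/L$, combined with the proximal optimality inequality \eqref{eq:optcond}, and for \eqref{eq:key1} the extra convexity bound $f(\xi)+\pscal{\nabla f(\xi)}{\vartheta-\xi}\leq f(\vartheta)$ with the descent inequality applied at $(\Prox_\gamma(\theta),\xi)$. The only slip is the phrase ``add $\gamma$ times \eqref{eq:optcond}'': since \eqref{eq:optcond} already carries the factor $\gamma$, it should be added as is (your displayed inequality is nonetheless the correct one, so this is purely a wording issue).
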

\begin{proof}
  Since $\nabla f$ is Lipschitz, the descent lemma implies that for any
  $\gamma^{-1} \geq L$
  \begin{equation}
    \label{eq:step:key2}
    f(p) - f(\theta') \leq \pscal{\nabla f(\theta')}{p-\theta'} + \frac{1}{2 \gamma} \|p - \theta'\|^2 \eqsp.
  \end{equation}
  This inequality applied with $p = \Prox_{\gamma,g}(\theta)$ combined with
  (\ref{eq:optcond}) yields (\ref{eq:key2}).  When $f$ is convex, $
  f(\xi)+\pscal{\nabla f(\xi)}{\theta'-\xi} - f(\theta') \leq 0$ which,
  combined again with (\ref{eq:optcond}) and (\ref{eq:step:key2}) applied with
  $(p,\theta') \leftarrow (\Prox_{\gamma,g}(\theta), \xi)$ yields the result.
\end{proof}

\begin{lemma} \label{lem:Lipschitz:GradientProximalMap}
  Assume \autoref{A1}. Then for any $\gamma >0$, $\theta,\theta'\in\Theta$,
  \begin{align}
    \label{eq:majoration-utile}
    &\|\theta - \gamma \nabla f(\theta) - \theta' + \gamma \nabla f(\theta') \| \leq (1+\gamma L)\| \theta - \theta' \| \eqsp, \\
    \label{eq:Tgamma-contraction}
    &\|T_\gamma(\theta) - T_\gamma(\theta') \| \leq(1+\gamma L) \| \theta - \theta' \| \eqsp.
  \end{align}
  If in addition $f$ is convex then for any $\gamma \in \ocint{0,2/L}$,
  \begin{align}
    \label{eq:majoration-utile:conv}
    &\|\theta - \gamma \nabla f(\theta) - \theta' + \gamma \nabla f(\theta') \| \leq \| \theta - \theta' \| \eqsp, \\
    \label{eq:Tgamma-contraction:conv}
    &\|T_\gamma(\theta) - T_\gamma(\theta') \| \leq \| \theta - \theta' \| \eqsp.
  \end{align}
\end{lemma}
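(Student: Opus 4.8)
The plan is to prove the four inequalities in order, noting that in each pair the bound on $T_\gamma$ follows from the corresponding bound on the gradient step $\theta \mapsto \theta - \gamma \nabla f(\theta)$ simply by composing with $\Prox_\gamma$, which is $1$-Lipschitz by \autoref{lem:prox}.

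First I would dispatch \eqref{eq:majoration-utile} by the triangle inequality together with the $L$-Lipschitz continuity of $\nabla f$ from \autoref{A1}: write $\theta - \gamma\nabla f(\theta) - \vartheta + \gamma\nabla f(\vartheta) = (\theta - \vartheta) - \gamma(\nabla f(\theta) - \nabla f(\vartheta))$ and bound each term. Since $T_\gamma(\theta) = \Prox_\gamma(\theta - \gamma\nabla f(\theta))$, applying $\Prox_\gamma$ to the points $\theta - \gamma\nabla f(\theta)$ and $\vartheta - \gamma\nabla f(\vartheta)$ and using that it is $1$-Lipschitz immediately turns \eqref{eq:majoration-utile} into \eqref{eq:Tgamma-contraction}.

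For the convex case the key input is the co-coercivity (Baillon--Haddad) inequality $\pscal{\nabla f(\theta) - \nabla f(\vartheta)}{\theta - \vartheta} \geq L^{-1}\|\nabla f(\theta) - \nabla f(\vartheta)\|^2$. I would obtain it by applying the descent lemma --- exactly as in the proof of \autoref{lem:key}, see \eqref{eq:step:key2} --- to the auxiliary function $\xi \mapsto f(\xi) - \pscal{\nabla f(\vartheta)}{\xi}$, which is convex, has $L$-Lipschitz gradient, and is minimized at $\vartheta$: the descent lemma with step $1/L$ shows that its value at $\vartheta$ is at most its value at $\theta$ minus $(2L)^{-1}\|\nabla f(\theta) - \nabla f(\vartheta)\|^2$; symmetrizing in $\theta$ and $\vartheta$ and adding yields the claim (one could instead cite \cite{bauschke:combettes:2011}). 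Given co-coercivity, \eqref{eq:majoration-utile:conv} follows by expanding $\|(\theta - \vartheta) - \gamma(\nabla f(\theta) - \nabla f(\vartheta))\|^2$: the cross term is bounded above by $-2\gamma L^{-1}\|\nabla f(\theta) - \nabla f(\vartheta)\|^2$, which combines with the $\gamma^2\|\nabla f(\theta) - \nabla f(\vartheta)\|^2$ term into $\gamma(\gamma - 2/L)\|\nabla f(\theta) - \nabla f(\vartheta)\|^2 \leq 0$ as soon as $\gamma \leq 2/L$; taking square roots, and then composing once more with the $1$-Lipschitz map $\Prox_\gamma$, delivers \eqref{eq:Tgamma-contraction:conv}.

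The only step that is not a one-line manipulation is the co-coercivity inequality, so that is where I expect the real work to be --- though, since the descent lemma is already used in the surrounding proofs, this amounts to a short argument of the same flavour rather than a genuine obstacle.
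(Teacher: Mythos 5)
Your proposal is correct and follows essentially the same route as the paper: the triangle inequality plus the Lipschitz bound on $\nabla f$ for \eqref{eq:majoration-utile}, composition with the $1$-Lipschitz map $\Prox_\gamma$ (\autoref{lem:prox}) to pass to $T_\gamma$, and co-coercivity combined with expanding the square to get \eqref{eq:majoration-utile:conv} for $\gamma \in \ocint{0,2/L}$. The only difference is that the paper simply cites \cite[Theorem 2.1.5]{nesterov:2004} for the co-coercivity inequality, whereas you supply a short (and correct) proof of it via the descent lemma applied to $\xi \mapsto f(\xi)-\pscal{\nabla f(\vartheta)}{\xi}$.
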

\begin{proof}
  (\ref{eq:Tgamma-contraction}) and (\ref{eq:Tgamma-contraction:conv}) follows
  from (\ref{eq:majoration-utile}) and (\ref{eq:majoration-utile:conv})
  respectively by the Lipschitz property of the proximal map $\Prox_{\gamma,
    g}$ (see \autoref{lem:prox}). (\ref{eq:majoration-utile}) follows directly
  from the Lipschitz property of $f$. It remains to prove
  (\ref{eq:majoration-utile:conv}). Since $f$ is a convex function with
  Lipschitz-continuous gradients, \cite[Theorem 2.1.5]{nesterov:2004} shows
  that, for all $\theta, \theta' \in \Theta$, $ L \, \pscal{\nabla f(\theta) -
    \nabla f(\theta')}{\theta - \theta'} \geq \| \nabla f(\theta) - \nabla
  f(\theta') \|^2$.  The result follows.
\end{proof}

\begin{lemma}\label{lem:techlem}
  Assume \autoref{A1}.  Set $S_\gamma(\theta) \eqdef \Prox_{\gamma,g}(\theta -
  \gamma H)$ and $\eta\eqdef H -\nabla f(\theta)$.  For any $\theta \in \Theta$
  and $\gamma>0$,
  \begin{equation}
    \label{eq:util1}
    \| T_\gamma(\theta) - S_\gamma(\theta) \|  \leq \gamma \| \eta \| \eqsp.
  \end{equation}
\end{lemma}
\begin{proof}
  We have $\| T_\gamma(\theta) - S_\gamma(\theta) \| = \|
  \Prox_{\gamma,g}(\theta - \gamma \nabla f(\theta)) - \Prox_{\gamma,g}(\theta
  - \gamma H) \|$ and \eqref{eq:util1} follows from \autoref{lem:prox}.
\end{proof}

\subsection{Proof of \autoref{sec:stochastic-proximal-gradient}}
\subsubsection{Proof of \autoref{lemma:RobbinsSiegmund:deterministe}}
\label{sec:proof:lemma:RobbinsSiegmund:deterministe}
Set $w_n =v _n + \sum_{k \geq n+1} \xi_k +M$ with $M \eqdef - \inf_n \sum_{k
  \geq n} \xi_k$ so that $\inf_n w_n \geq 0$. Then
\[
0 \leq w_{n+1} \leq v_n - \chi_{n+1} + \xi_{n+1} + \sum_{k \geq n+2} \xi_k +M \leq w_n
-\chi_{n+1} \eqsp.
\]
$\sequence{w}[n][\nset]$ is non-negative and non increasing; therefore it
converges.  Furthermore, $0 \leq \sum_{k=0}^n \chi_k \leq w_0$ so that $\sum_n
\chi_n < \infty$. The convergence of $\sequence{w}[n][\nset]$ also implies the
convergence of $\sequence{v}[n][\nset]$. This concludes the proof.

\subsubsection{Proof of \autoref{prop:CvgCvx:PerturbedProximalGradient}}
\label{sec:proof:prop:CvgCvx:PerturbedProximalGradient}
Let $\theta_\star \in \L$, which is not empty by \autoref{A1:compl}; note that
$F(\theta_\star) = \min F$. We have by \eqref{eq:key1} applied with $\theta
\leftarrow \theta_n - \gamma_{n+1} H_{n+1}$, $\xi \leftarrow \theta_n$,
$\theta' \leftarrow \theta_\star$, $\gamma \leftarrow \gamma_{n+1}$
\begin{align*}
  \| \theta_{n+1} - \theta_\star \|^2 &\leq \|\theta_n - \theta_\star\|^2 - 2
  \gamma_{n+1} \left( F(\theta_{n+1}) - \min F \right) - 2 \gamma_{n+1}
  \pscal{\theta_{n+1}- \theta_\star}{\eta_{n+1}} \eqsp.
\end{align*}
We write $\theta_{n+1} - \theta_\star = \theta_{n+1} -
T_{\gamma_{n+1}}(\theta_n ) + T_{\gamma_{n+1}}(\theta_n) - \theta_\star. $ By
\autoref{lem:techlem}, $\| \theta_{n+1} -T_{\gamma_{n+1}}(\theta_n) \| \leq
\gamma_{n+1} \| \eta_{n+1}\|$ so that,
\[  - \pscal{\theta_{n+1}- \theta_\star}{\eta_{n+1}} \leq \gamma_{n+1} \|
\eta_{n+1} \|^2 - \pscal{T_{\gamma_{n+1}}(\theta_n) -\theta_\star}{\eta_{n+1}}
\eqsp.
\]
Hence,
\begin{multline}
  \label{eq:preparation:SRlemma}
  \| \theta_{n+1} - \theta_\star \|^2   \leq \|\theta_n - \theta_\star\|^2   - 2 \gamma_{n+1} \left(  F(\theta_{n+1}) - \min F\right) \\
  + 2 \gamma_{n+1}^2 \| \eta_{n+1} \|^2 - 2 \gamma_{n+1}
  \pscal{T_{\gamma_{n+1}}(\theta_n) -\theta_\star}{\eta_{n+1}} \eqsp.
\end{multline}
Under (\ref{eq:CS:BorelCantelli}) and (\ref{eq:preparation:SRlemma}),
\autoref{lemma:RobbinsSiegmund:deterministe} shows that $\sum_n \gamma_{n}
\left( F(\theta_n) -\min F \right) < \infty$ and $\lim_n \| \theta_n -
\theta_\star \|$ exists.  This implies that $\sup_n \|\theta_n \| < \infty$.
Since $\sum_n \gamma_n = +\infty$, there exists a subsequence
$\{\theta_{\phi_n}, n \in \nset \}$ such that $\lim_n F(\theta_{\phi_n}) = \min
F$. The sequence $\{\theta_{\phi_n}, n \geq 0 \}$ being bounded, we can assume
without loss of generality that there exists $\theta_\infty \in \rset^d$ such
that $\lim_n \theta_{\phi_n} = \theta_\infty$.

Let us prove that $\theta_\infty \in \L$. Since $g$ is lower semi-continuous on
$\Theta$, $\liminf_n g(\theta_{\phi_n}) \geq g(\theta_\infty)$ so that
$\theta_\infty \in \Theta$.  Since $F$ is lower semi-continuous on $\Theta$, we
have
\[
\min F= \liminf_{n \to \infty} F(\theta_{\phi_n}) \geq F(\theta_\infty) \geq \min F \eqsp,
\]
showing that $F(\theta_\infty)= \min F$.

By (\ref{eq:preparation:SRlemma}), for any $m$ and $n \geq \phi_m$
\[
\| \theta_{n+1} - \theta_\infty\|^2 \leq \|\theta_{\phi_m} - \theta_\infty\|^2
- 2 \sum_{k=\phi_m}^n \gamma_{k+1} \{ \pscal{T_{\gamma_{k+1}}(\theta_k) -
  \theta_\infty}{\eta_{k+1}} + \gamma_{k+1} \| \eta_{k+1} \|^2  \} \eqsp.
\]
For any $\epsilon >0$, there exists $m$ such that the RHS is upper bounded by
$\epsilon$. Hence, for any $n \geq \phi_m$, $\|\theta_{n+1}- \theta_\infty \|^2
\leq \epsilon$, which proves the convergence of $\sequence{\theta}[n][\nset]$
to $\theta_\infty$.

\subsubsection{Proof of \autoref{theo:rate-convergence-basic}}\label{sec:prooflemma:rate-convergence-basic}
Let $\theta_\star \in \L$; note that $F(\theta_\star) = \min F$.  We
first apply \eqref{eq:key1} with $\theta \leftarrow \theta_j-\gamma_{j+1}
H_{j+1}$, $\xi \leftarrow \theta_j$, $\theta'\leftarrow \theta_\star$, $\gamma
\leftarrow \gamma_{j+1}$:
\begin{align*}
  F(\theta_{j+1})- \min F & \leq (2\gamma_{j+1})^{-1} \,
  \left(\|\theta_j-\theta_\star\|^2-\|\theta_{j+1}-\theta_\star\|^2\right)-
  \pscal{\theta_{j+1}-\theta_\star}{\eta_{j+1}} \eqsp.
\end{align*}
Multiplying both sides by $a_{j+1}$ gives:
\begin{multline*}
  a_{j+1} \Big(F(\theta_{j+1}) -\min F \Big)\leq
  \frac{1}{2}\left(\frac{a_{j+1}}{\gamma_{j+1}}-\frac{a_{j}}{\gamma_{j}}\right)\|\theta_j-\theta_\star\|^2
  + \frac{a_{j}}{2\gamma_{j}}\|\theta_j-\theta_\star\|^2 \\
  -\frac{a_{j+1}}{2\gamma_{j+1}}\|\theta_{j+1}-\theta_\star\|^2
  -a_{j+1}\pscal{\theta_{j+1}-\theta_\star}{\eta_{j+1}}.
\end{multline*}
Summing from $j=0$ to $n-1$ gives
\begin{multline}\label{eq2:proofthm1}
  \frac{a_n}{2 \gamma_n} \|\theta_n - \theta_\star \|^2 + \sum_{j=1}^na_{j}
  \{F(\theta_{j}) - \min F \} \leq
  \frac{1}{2}\sum_{j=1}^n\left(\frac{a_j}{\gamma_j}-\frac{a_{j-1}}{\gamma_{j-1}}\right)
  \|\theta_{j-1}-\theta_\star\|^2  \\
  -\sum_{j=1}^{n} a_{j} \pscal{\theta_{j}-\theta_\star}{\eta_{j}} +
  \frac{a_0}{2\gamma_0} \|\theta_0 - \theta_\star \|^2 \eqsp.
\end{multline}
We decompose $\pscal{\theta_{j}-\theta_\star}{\eta_{j}}$ as follows:
\[
\pscal{\theta_{j}-\theta_\star}{\eta_{j}}=
\pscal{\theta_{j}-T_{\gamma_{j}}(\theta_{j-1})}{\eta_{j}}+\pscal{T_{\gamma_{j}}(\theta_{j-1})-\theta_\star}{\eta_{j}}\eqsp.
\]
By \autoref{lem:techlem}, we get
$\left|\pscal{\theta_{j}-T_{\gamma_{j}}(\theta_{j-1})}{\eta_{j}}\right|\leq
\gamma_{j}\|\eta_{j}\|^2$ which concludes the proof.

\subsection{Proof of \autoref{sec:MC:fixedabatch}}
\label{sec:proof:MC:fixedabatch}
The proof of \autoref{theo:approxsto:cvg} is given in the case $m=1$; we simply
denote by $X_n$ the sample $X_{n}^{(1)}$.  The proof for the case $m>1$ can be
adapted from the proof below, by substituting the functions $\H{\theta}{x}$ and
$W(x)$ by
\[
\overline H_{\theta}(x_1, \cdots, x_m) = \frac{1}{m} \sum_{k=1}^m \H{\theta}{x_k}
\qquad \overline W(x_1, \cdots, x_m) = \frac{1}{m} \sum_{k=1}^m W(x_k) \eqsp;
\]
the kernel $P_\theta$ and its invariant measure $\pi_\theta$ by
\begin{align*}
\overline P_\theta(x_1, \cdots, x_m; B)
&=
\idotsint  P_\theta(x_m, \rmd y_1)  \, \prod_{k=2}^{m} P_\theta(y_{k-1},\rmd y_{k}) \1_{B}(y_1,\dots,y_m) \eqsp, \\
\overline \pi_\theta(B) &= \idotsint  \pi_\theta(\rmd y_1)  \, \prod_{k=2}^{m} P_\theta(y_{k-1},\rmd y_{k}) \1_{B}(y_1,\dots,y_m) \eqsp,
\end{align*}
for any $(x_1,\dots,x_m) \in \Xset^n$ and $B \in \Xsigma^{\times n}$.
\subsubsection{Preliminary results}
\begin{proposition} 
  \label{prop:SousGradient}
  Assume that $g$ is proper convex and Lipschitz on $\Theta$ with Lipschitz
  constant $K$. Then, for all $\theta \in \Theta$,
  \begin{equation}
    \label{eq:regulariteGamma:Prox}
     \norm{ \Prox_{\gamma,g}(\theta) - \theta } \leq  K \gamma \eqsp.
  \end{equation}
\end{proposition}
\begin{proof}
For all $\theta \in \Theta$, we get by \autoref{lem:prox}
$$
0 \leq \gamma^{-1} \norm{\theta - \Prox_{\gamma,g}(\theta)}^2 \leq g(\theta) - g(\Prox_{\gamma,g}(\theta)) \leq K \norm{\theta - \Prox_{\gamma,g}(\theta)} \eqsp.
$$
\end{proof}

\begin{proposition}\label{prop:approxsto:RegulTheta:Tgamma}
  Assume \autoref{A1}, \autoref{A1:compl} and $\Theta$ is bounded. Then \[
  \sup_{\gamma \in \ocint{0,1/L}} \sup_{\theta \in \Theta} \norm{T_{\gamma}(\theta)
  } < \infty \eqsp.
  \] If in addition  \autoref{hyp:AS:withbias}-\ref{hyp:AS:StabiliteCompact:item2} holds, then  there exists a constant $C$ such
  that for any $\theta, \bar \theta \in \Theta$, $\gamma, \bar \gamma \in
  \ocint{0,1/L}$ 
\begin{align*}
  \norm{ T_\gamma(\theta) - T_{\bar \gamma}(\bar \theta)} \leq C \left( \gamma
    + \bar \gamma + \| \theta - \bar{\theta}\| \right) \eqsp.
\end{align*}
\end{proposition}
\begin{proof}
  Let $\theta_\star$ such that for any $\gamma>0$, $\theta_\star =
  T_{\gamma}(\theta_\star)$ (such a point exists by \autoref{A1:compl}
  and \eqref{eq:definition:Lset}).  We write $ T_\gamma(\theta) =
  \left(T_\gamma(\theta) - \theta_\star \right) + \theta_\star$.  By
  \autoref{lem:Lipschitz:GradientProximalMap}, there exists a constant
  $C$ such that for any $\theta \in \Theta$ and any $\gamma \in
  \ocint{0,1/L}$, $\norm{T_{\gamma}(\theta) - \theta_\star } \leq 2 \,
  \norm{\theta - \theta_\star} \leq \pma{2} \norm{\theta} + \pma{2}
  \norm{\theta_\star}$. This concludes the proof of the first
  statement. We write $T_\gamma(\theta) - T_{\bar \gamma}(\bar \theta)
  = T_\gamma(\theta) - T_{\bar \gamma}(\theta) + T_{\bar
    \gamma}(\theta) - T_{\bar \gamma}(\bar \theta)$. By
  \autoref{lem:prox}
\[
\norm{T_{\bar \gamma}(\theta) - T_{\bar \gamma}(\bar \theta)} \leq \norm{\theta
  - \bar \theta - \bar \gamma \nabla f(\theta) + \bar \gamma \nabla f(\bar
  \theta)} \leq \| \theta - \bar \theta \| + \bar \gamma \sup_{\theta \in \Theta}
\| \nabla f(\theta) \| \eqsp.
\]
By \autoref{A1} and since $\Theta$ is bounded, $ \sup_{\theta \in \Theta} \| \nabla
f(\theta) \| < \infty$.  In addition, using again \autoref{lem:prox},
\begin{align*}
  \norm{T_\gamma(\theta) - T_{\bar \gamma}(\theta)} & \leq \left(\gamma + \bar
    \gamma \right) \ \sup_{\theta \in \Theta} \|\nabla f(\theta) \| +
  \norm{\Prox_{\gamma,g}(\theta) - \Prox_{\bar \gamma,g}(\theta)} \eqsp.
\end{align*}
We conclude by using
\begin{align*}
  \norm{\Prox_{\bar \gamma,g}(\theta) - \Prox_{\gamma,g}(\theta) } & \leq
  \norm{\Prox_{\bar \gamma,g}(\theta) - \theta } + \norm{\theta - \Prox_{\gamma,g}(\theta) } \\
  & \leq \left( \gamma + \bar \gamma \right) \ \sup_{\gamma \in \ocint{0,1/L}}
  \sup_{\theta \in \Theta} \gamma^{-1} \, \norm{ \Prox_{\gamma,g}(\theta) -
    \theta } \eqsp.
\end{align*}
\end{proof}

\begin{lemma} \label{prop:Poisson}
  Assume \autoref{hyp:UniformErgo} and
  \autoref{hyp:AS:withbias}-\ref{hyp:smooth:intheta}.
  \begin{enumerate}[label=(\roman*)]
 \item \label{hyp:Poisson:hatH} There exists a measurable function $(\theta,x) \mapsto
    \hatH{\theta}{x}$ such that $\sup_{\theta \in \Theta} \normW{\hatH{\theta}{}}{W}< \infty$ and for any $(\theta,x) \in \Theta \times \Xset$,
\begin{equation}\label{eq:PoissonEquation}
\hatH{\theta}{x} - P_\theta \hatH{\theta}{x} = \H{\theta}{x} - \int \H{\theta}{y} \pi_\theta(\rmd y) \eqsp.
\end{equation}
\item \label{hyp:Poisson:RegulTheta} There exists a constant $C$ such that for
  any $\theta, \theta' \in \Theta$,
\[
\normWm{P_\theta \hatH{\theta}{} - P_{\theta'} \hatH{\theta'}{}}{W} \leq C \
\norm{\theta - \theta'} \eqsp.
\]
  \end{enumerate}
\end{lemma}
\begin{proof}
See \cite[Lemma 4.2]{fort:moulines:priouret:2012}.
\end{proof}

\begin{lemma}\label{prop:AS:moments}
  Assume \autoref{hyp:MonteCarloSamples} and \autoref{hyp:UniformErgo}. Then,
  $\sup_n \PE\left[W^p(X_n) \right] < \infty$.
\end{lemma}
\begin{proof}
  \pma{Conditionnally to the past $\F_{j-1}$, the conditional
    distribution of $X_j$ is $P_{\theta_{j-1}}(X_{j-1}, \cdot)$.
    Therefore, we write
    \[
    \PE\left[W^p(X_n) \right] = \PE\left[ \CPE{W^p(X_n)}{\F_{n-1}}
    \right] = \PE\left[ P_{\theta_{n-1}} W^p(X_{n-1})\right].
    \]
    We then use the drift inequality to obtain $\PE\left[W^p(X_n)
    \right] \leq \lambda \PE\left[ W^p(X_{n-1}) \right] + b$.  The
    proof then follows from a trivial induction. }
\end{proof}

\begin{lemma} \label{prop:approxsto:DeltaTheta}
  Assume \autoref{A1},
  \autoref{hyp:AS:withbias}-\ref{hyp:AS:StabiliteCompact:item2} and $\Theta$ is
  bounded.  There exists a constant $C$ such that w.p.1, for all $n \geq 0$,
  $$
  \norm{\theta_{n+1} - \theta_n} \leq C \gamma_{n+1} \left(1 +
    \norm{\eta_{n+1}} \right)\eqsp.
$$
\end{lemma}
\begin{proof}
  We write
\[
\theta_{n+1} - \theta_n = \theta_{n+1} - \Prox_{\gamma_{n+1},g}(\theta_n) +
\Prox_{\gamma_{n+1},g}(\theta_n) - \theta_n.
\]
Since by \autoref{lem:prox}, $\theta \mapsto \Prox_{\gamma,g}(\theta)$ is
Lipschitz for any $\gamma>0$, we get
\begin{align*}
  &\norm{\theta_{n+1} - \Prox_{\gamma_{n+1},g}(\theta_n)} =
  \norm{\Prox_{\gamma_{n+1},g}(\theta_n - \gamma_{n+1} \eta_{n+1} -\gamma_{n+1}
    \nabla
    f(\theta_n)) - \Prox_{\gamma_{n+1},g}\left( \theta_n\right)} \\
  & \quad \leq \gamma_{n+1} \norm{\eta_{n+1} + \nabla f(\theta_n)} \leq
  \gamma_{n+1} \left( \norm{\eta_{n+1}} + \sup_{\theta \in \Theta} \norm{\nabla
      f(\theta)} \right) \eqsp.
\end{align*}
By \autoref{A1}, w.p.1.  $ \sup_{\theta \in \Theta} \norm{\nabla f(\theta)} <
\infty$; hence, there exists $C_1$ such that w.p.1.  for all $n \geq 0$,
$\norm{ \theta_{n+1} - \Prox_{\gamma_{n+1},g}(\theta_n) } \leq C_1 \gamma_{n+1}
\left(1 +\norm{\eta_{n+1} } \right)$.  Finally, under \autoref{hyp:AS:withbias}-\ref{hyp:AS:StabiliteCompact:item2}, there exists a
constant $C_2$ such that, w.p.1.,
\[
\sup_n \gamma_{n+1}^{-1} \norm{ \Prox_{\gamma_{n+1},g}(\theta_n) - \theta_n }
\leq  \sup_{\gamma \in \ocint{0,1/L}} \sup_{\theta \in \Theta} \gamma^{-1} \norm{
  \Prox_{\gamma,g}(\theta) - \theta } \leq C_2 \eqsp.
\]
This concludes the proof.
\end{proof}
\begin{lemma}
  \label{prop:controle:Eta}
  Assume \autoref{A1}, \autoref{hyp:MonteCarloSamples},
  \autoref{hyp:UniformErgo} and $\Theta$ is bounded. There exists a constant $C$
  such that w.p.1, for all $n \geq 0$, $\norm{\eta_{n+1}} \leq C W(X_{n+1})$.
\end{lemma}
\begin{proof}
  By \autoref{hyp:MonteCarloSamples} and \autoref{hyp:UniformErgo},
  $\norm{\eta_{n+1}} \leq \left( \sup_{\theta \in \Theta}
    \normW{\H{\theta}{}}{W}\right) \, W(X_{n+1}) + \sup_{\theta \in \Theta} \|
  \nabla f(\theta) \|$. The result follows since $\nabla f$ is Lipschitz by
  \autoref{A1}, and since $W \geq 1$.
\end{proof}

\subsubsection{Proof of \autoref{theo:approxsto:cvg}}
The proof of the almost-sure convergence consists in verifying the
assumptions of \autoref{prop:CvgCvx:PerturbedProximalGradient}.
\pma{Let us start with the proof that almost-surely, $\sum_n
  \gamma_{n+1}^2 \| \eta_{n+1}\|^2 < \infty$. This property is a
  consequence of \autoref{prop:AS:termeEnSquare} applied with $a_n
  \leftarrow \gamma_n^2$. It remains to prove that almost-surely
  \[
  \sum_n \gamma_n \eta_n < \infty,\qquad \sum_n \gamma_{n+1}
  \pscal{T_{\gamma_{n+1}}(\theta_n)}{\eta_{n+1}} < \infty;
  \]
  note that they are both of the form $\sum_n \gamma_{n+1}
  \Am_{\gamma_{n+1}}(\theta_n) \eta_{n+1}$ with, respectively,
  $\Am_{\gamma}(\theta)$ equal to the identity matrix, and
  $\Am_{\gamma}(\theta) = T_\gamma(\theta)$.  In the case the Monte
  Carlo is unbiased, we apply \autoref{prop:AS:TermeMartingale} with
  $a_n \leftarrow \gamma_n$ and $A_\gamma(\theta)$ equal to the
  identity matrix and we obtain the almost-sure convergence of $\sum_n
  \gamma_n \eta_n$; we then apply \autoref{prop:AS:TermeMartingale}
  with $a_n \leftarrow \gamma_n$ and
  $A_\gamma(\theta)=T_\gamma(\theta)$, and we obtain the almost-sure
  convergence of $\sum_n \gamma_{n+1}
  \pscal{T_{\gamma_{n+1}}(\theta_n)}{\eta_{n+1}}$ - note that by
  \autoref{prop:approxsto:RegulTheta:Tgamma}, $T_\gamma(\theta)$
  satisfies the assumptions on $\Am_\gamma(\theta)$. In the case the
  Monte Carlo is biased, the steps are the same except we use
  \autoref{prop:AS:TermeFluctuations} instead of
  \autoref{prop:AS:TermeMartingale}.}

For the control of the moments, we use
\autoref{theo:rate-convergence-basic} and again
\autoref{prop:AS:termeEnSquare} and \autoref{prop:AS:TermeMartingale}
for the unbiased case (or \autoref{prop:AS:TermeFluctuations} for the
biased case).

\begin{lemma} \label{prop:AS:termeEnSquare}
  Assume \autoref{A1}, \autoref{hyp:MonteCarloSamples},
  \autoref{hyp:UniformErgo} and $\Theta$ is bounded.
\begin{enumerate}[label=(\roman*)]
\item If $a_k \geq 0$ and $\sum_{k=1}^\infty a_k < \infty$ then with
  probability one, $\sum_{n \geq 1} a_n \| \eta_n \|^2 < \infty$.
\item for any $q \in \ccint{1,p/2}$, there exists a constant $C$ such that for
  any non-negative numbers $\{a_1, \cdots, a_n \}$,
\[
\normLq{ \sum_{k=1}^n a_{k} \| \eta_{k} \|^2}{q} \leq C \, \sum_{k=1}^n a_k
\eqsp.
\]
\end{enumerate}
\end{lemma}
\begin{proof}
  We write
  \begin{align*}
    \PE\left[ \sum_{n \geq 0} a_{n+1} \| \eta_{n+1} \|^2 \right] \leq \sup_n
    \left( \PE\left[\| \eta_{n+1} \|^2 \right] \right) \, \sum_{n \geq 0}
    a_{n+1}\eqsp.
  \end{align*}
  By \autoref{prop:AS:moments} and \autoref{prop:controle:Eta}, $\sup_n
  \normLq{ \eta_{n+1}}{2} < \infty$ so the RHS is finite.  By the Minkovski
  inequality, we write since $a_k  >0$,
\begin{align*}
  \normLq{ \sum_{k=0}^n a_{k+1} \|\eta_{k+1} \|^2}{q} \leq \sup_n \normLq{
    \eta_{n} }{2q}^{2} \, \sum_{k=1}^{n+1} a_{k} \eqsp.
\end{align*}
The supremum is finite by \autoref{prop:AS:moments} and
\autoref{prop:controle:Eta}.
\end{proof}

\begin{proposition}
\label{prop:AS:TermeMartingale}
Assume \autoref{A1}, \autoref{A2}, \autoref{hyp:MonteCarloSamples},
\autoref{hyp:UniformErgo}, $\Theta$ is bounded and the Monte Carlo approximation is
unbiased.  Let $\sequence{a}[n][\nset]$ be a deterministic positive sequence
and $\{\Am_\gamma(\theta), \gamma \in \ocint{0,1/L}, \theta \in \Theta \}$ be
deterministic matrices such that
  \begin{align}
    & \label{eq:conditions:sur:Am:1} \sup_{\gamma \in \ocint{0,1/L}}
    \sup_{\theta \in \Theta} \| \Am_\gamma(\theta) \| < \infty \eqsp.
  \end{align}

\begin{enumerate}[label=(\roman*)]
\item If $\sum_{n \geq 0} a_n^2<\infty$, then the series $\sum_{n \geq 0}
  a_{n+1} \Am_{\gamma_{n+1}}(\theta_n) \eta_{n+1}$ converges $\PP$-\as\
\item For any $q \in \ocint{1, p/2}$, there exists a constant $C$ such that
\[
\normLq{ \sum_{k=0}^n a_{k+1} \Am_{\gamma_{k+1}}(\theta_k) \eta_{k+1} }{q} \leq
C \ \left( \sum_{k=0}^n  a_{k+1}^2\right)^{1/2} \eqsp.
\]
\end{enumerate}
\end{proposition}
\begin{proof}
  Since $\theta_n \in \F_n$, we have $\PE\left[ a_{n+1}
    \Am_{\gamma_{n+1}}(\theta_n) \ \eta_{n+1} \vert \F_n \right] = 0$, thus
  showing that $\{ M_n = \sum_{k=0}^n a_{k+1} \Am_{\gamma_{k+1}}(\theta_k)
  \eta_{k+1}, n \in \nset\}$ is a martingale. This martingale converges
  almost-surely if $S= \sum_{n \geq 0 } a_{n+1}^2 \|
  \Am_{\gamma_{n+1}}(\theta_n) \|^2 \| \eta_{n+1} \|^2 < \infty$ $\PP$-\as\
  (see e.g.~\cite[Theorem 2.17]{hall:heyde:1980}).  Using
  (\ref{eq:conditions:sur:Am:1}) and  \autoref{prop:AS:termeEnSquare}, $S < \infty$
  $\PP$-\as\

  Consider now  the $L^q$-moment of $M_n$. We apply~\cite[Theorem
  2.10]{hall:heyde:1980}: for any $q \in \ocint{1,p/2}$, there exists a
  constant $C$ such that for any $n \geq 0$,
\[
\normLq{\sum_{k=0}^n a_{k+1} \Am_{\gamma_{k+1}}(\theta_k) \eta_{k+1}}{q} \leq C
\left( \sum_{k=0}^n \normLq{ a_{k+1} \Am_{\gamma_{k+1}}(\theta_k) \eta_{k+1}
  }{q}^2\right)^{1/2} \eqsp.
\]
\autoref{prop:AS:moments} and \autoref{prop:controle:Eta} imply that $\sup_n
\normLq{\eta_{n+1}}{q} < \infty$; we then conclude with
\eqref{eq:conditions:sur:Am:1}.
\end{proof}

\begin{proposition}
\label{prop:AS:TermeFluctuations}
Assume \autoref{A1}, \autoref{A2}--\autoref{hyp:AS:withbias} and $\Theta$ is
bounded.  Let $\{a_n,n \geq 0 \}$ be a positive sequence and
$\{\Am_\gamma(\theta), \gamma \in \ocint{0,1/L}, \theta \in \Theta \}$ be
(deterministic) function-valued matrices such that there exists $C_\Am$ and for
any $\gamma, \bar \gamma \in \ocint{0,1/L}$ and $\theta, \bar{\theta} \in
\Theta$ 
\begin{equation}
\sup_{\gamma \in \ocint{0,1/L}} \sup_{\theta \in \Theta} \| \Am_\gamma(\theta)
  \| < \infty \eqsp, \qquad \norm{\Am_\gamma(\theta) - \Am_{\bar
      \gamma}(\bar{\theta})} \leq C_\Am \left( \gamma + \bar \gamma +
    \norm{\theta- \bar{\theta}} \right) \eqsp. \label{eq:conditions:sur:Am:2}
\end{equation}
\begin{enumerate}[label=(\roman*)]
\item If $\sum_n a_n \gamma_n < \infty $, $\sum_n a_n^2 < \infty$ and $\sum_n
  |a_{n+1} -a _n | < \infty$ then the series $\sum_{n \geq 0} a_{n+1}
  \Am_{\gamma_{n+1}}(\theta_n) \eta_{n+1}$ converges $\PP$-\as\ 
\item For any $q \in \ocint{1, p/2}$, there exists a constant $C$ such that
\begin{multline*}
  \normLq{ \sum_{k=0}^n a_{k+1} \Am_{\gamma_{k+1}}(\theta_k) \eta_{k+1} }{q}
  \leq C \ \left\{ 1 + \left( \sum_{k=0}^n a_{k+1}^2\right)^{1/2} +
    \sum_{k=1}^n \left| a_{k+1} -a_k \right| + \sum_{k=1}^n a_k \gamma_k
  \right\} \eqsp.
\end{multline*}
\end{enumerate}
\end{proposition}
\begin{proof}
\begin{enumerate}[label=(\roman*), wide=0pt, labelindent=\parindent]
\item By \autoref{hyp:MonteCarloSamples} and
  \autoref{prop:Poisson}-\ref{hyp:Poisson:hatH}, we write
\begin{align*}
  \eta_{n+1} & = \hatH{\theta_n}{X_{n+1}} - P_{\theta_n}
  \hatH{\theta_n}{X_{n+1}} \\
  & = \left( \hatH{\theta_n}{X_{n+1}} - P_{\theta_n} \hatH{\theta_n}{X_{n}}
  \right) + \left( P_{\theta_n} \hatH{\theta_n}{X_{n}} - P_{\theta_{n+1}}
    \hatH{\theta_{n+1}}{X_{n+1}}  \right) \\
  & + \left( P_{\theta_{n+1}} \hatH{\theta_{n+1}}{X_{n+1}} - P_{\theta_n}
    \hatH{\theta_n}{X_{n+1}}\right) \eqsp.
\end{align*}
We prove successively that w.p.1,
\begin{eqnarray}
\label{eq:proof:approxsto:step1}
\sum_n a_{n+1} \Am_{\gamma_{n+1}}(\theta_n) \
 \left( \hatH{\theta_n}{X_{n+1}} - P_{\theta_n} \hatH{\theta_n}{X_{n}} \right) < \infty \eqsp, \\
\label{eq:proof:approxsto:step2a}
\sum_{n \geq 0} a_{n+1} \Am_{\gamma_{n+1}}(\theta_n) \ \left(
P_{\theta_n} \hatH{\theta_n}{X_{n}} - P_{\theta_{n+1}}
  \hatH{\theta_{n+1}}{X_{n+1}} \right) < \infty \eqsp, \\
\label{eq:proof:approxsto:step2b} \sum_{n \geq 0} a_{n+1} \Am_{\gamma_{n+1}}(\theta_n) \left(
P_{\theta_{n+1}} \hatH{\theta_{n+1}}{X_{n+1}} - P_{\theta_n}
  \hatH{\theta_n}{X_{n+1}} \right)  < \infty \eqsp.
\end{eqnarray}
{\em Proof of~(\ref{eq:proof:approxsto:step1})}
By~\autoref{hyp:MonteCarloSamples}, $\{ \hatH{\theta_n}{X_{n+1}} - P_{\theta_n}
\hatH{\theta_n}{X_{n}}, n \in \nset\}$ is a martingale increment w.r.t. the
filtration $\{\F_n, n \geq 0\}$. The proof is along the same lines as the proof
of \autoref{prop:AS:TermeMartingale} upon noting that by \autoref{prop:Poisson}
and \autoref{hyp:UniformErgo}, there exists $C$ such that w.p.1 for all $n \geq
0$,
\[
\| \hatH{\theta_n}{X_{n+1}} - P_{\theta_n} \hatH{\theta_n}{X_{n}}\| \leq C \,
\left( W(X_{n+1}) + W(X_n)\right) \eqsp.
\] 
{\em Proof of (\ref{eq:proof:approxsto:step2a})} The sum is equal to $\sum_{n
  \geq 0} \Delta_{n+1} P_{\theta_n} \hatH{\theta_n}{X_{n}}$ with $\Delta_{n+1}
= a_{n+1} \Am_{\gamma_{n+1}}(\theta_n) - a_{n} \Am_{\gamma_{n}}(\theta_{n-1})$.
On one hand, by \autoref{prop:Poisson} and \autoref{hyp:UniformErgo}, there
exists $C$ such that w.p.1 for all $n \geq 0$,
\[
\norm{P_{\theta_n} \hatH{\theta_n}{X_{n}}} \leq C \, W(X_n) \eqsp.
\]
On the other hand, by \eqref{eq:conditions:sur:Am:2},
\autoref{prop:approxsto:DeltaTheta} and \autoref{prop:controle:Eta}, there
exists $C$ such that a.s. 
\[
\text{for all  $n \geq 0$}, \qquad \norm{\Delta_{n+1}} \leq C \Big(\left| a_{n+1} -
    a_n\right| + a_n \left( \gamma_n + \gamma_{n+1} \right) \Big) W(X_{n})
\eqsp.
\]
By \autoref{prop:AS:moments}, $\sup_n \PE\left[ W^2(X_n) \right] < \infty$.
Therefore, by (\ref{eq:conditions:sur:Am:2}) and the assumptions on $\{a_n,n
\geq 0\}$, we have $\sum_n \PE\left[ \left\| \Delta_{n+1} \ P_{\theta_n}
    \hatH{\theta_n}{X_{n}} \right\|\right] < \infty$; which concludes the
proof.

{\em Proof of (\ref{eq:proof:approxsto:step2b})} By
(\ref{eq:conditions:sur:Am:2}) and \autoref{prop:Poisson}, there exists a
constant $C$ such that w.p.1  for any $n$
\[
\left\| \Am_{\gamma_{n+1}}(\theta_n) \left( P_{\theta_{n+1}}
    \hatH{\theta_{n+1}}{X_{n+1}} - P_{\theta_n} \hatH{\theta_n}{X_{n+1}}
  \right)\right\| \leq C \, \norm{\theta_{n+1} - \theta_n} \, W(X_{n+1}) \eqsp.
\]
By \autoref{prop:approxsto:DeltaTheta} and \autoref{prop:controle:Eta}, there
exists a constant $C$ such that w.p.1,  
\[
\text{for all $n \geq 0$}, \qquad \norm{\theta_{n+1} - \theta_n} W(X_{n+1})
\leq C \gamma_{n+1} \, W^2(X_{n+1}) \eqsp.
\]
From \autoref{prop:AS:moments} and the assumptions on $\{a_n, n \geq 0\}$,
$\sum_n a_{n+1} \, \gamma_{n+1} \PE\left[W^2(X_{n+1}) \right] < \infty$ from
which (\ref{eq:proof:approxsto:step2b}) follows.

\item We start from the same decomposition of
$\eta_{n+1}$ in three terms. The first one is a martingale, and following the
same lines as in the proof of \autoref{prop:AS:TermeMartingale}, we obtain
\[
\normLq{\sum_{k=0}^n a_{k+1} \Am_{\gamma_{n+1}}(\theta_n) \ \left(
    \hatH{\theta_n}{X_{n+1}} - P_{\theta_n} \hatH{\theta_n}{X_{n}} \right) }{q}
\leq C \, \left( \sum_{k=0}^n a_{k+1}^2 \right)^{1/2} \eqsp.
\]
For the second term, we write
\begin{align*}
  \sum_{k=0}^n a_{k+1} & \Am_{\gamma_{k+1}}(\theta_k) \ \left( P_{\theta_k}
    \hatH{\theta_k}{X_{k}} - P_{\theta_{k+1}} \hatH{\theta_{k+1}}{X_{k+1}}
  \right) \\
  & \leq a_1 \Am_{\gamma_{1}}(\theta_0) P_{\theta_0} \hatH{\theta_0}{X_{0}} -
  a_{n+1} \Am_{\gamma_{n+1}}(\theta_n) P_{\theta_{n+1}}
  \hatH{\theta_{n+1}}{X_{n+1}} \\
  & + \sum_{k=1}^n \Delta_{k+1} \ P_{\theta_k} \hatH{\theta_k}{X_{k}} \eqsp.
\end{align*}
By the Minkovski inequality, it is easily seen that there exists a constant $C$
such that 
\begin{multline*}
  \normLq{\sum_{k=0}^n a_{k+1} \Am_{\gamma_{k+1}}(\theta_k) \ \left(
      P_{\theta_k} \hatH{\theta_k}{X_{k}} - P_{\theta_{k+1}}
      \hatH{\theta_{k+1}}{X_{k+1}}
    \right)}{q} \\
  \leq \left(1 + a_{n+1} + \sum_{k=1}^n \Big( \left| a_{k+1} - a_k \right| +
    a_k \left( \gamma_k + \gamma_{k+1} \right) \Big)\right) \eqsp.
\end{multline*}
Finally, for the last term, following the same computations as above, we have
by the Minkovski inequality 
\[
\normLq{\sum_{k=0}^n a_{k+1} \Am_{\gamma_{k+1}}(\theta_k) \ \left(
    P_{\theta_{k+1}} \hatH{\theta_{k+1}}{X_{k+1}} - P_{\theta_k}
    \hatH{\theta_k}{X_{k+1}} \right) }{q} \leq C \sum_{k=0}^n a_{k+1}
\gamma_{k+1} \eqsp.
\]
\end{enumerate}
\end{proof}

\subsection{Proof of \autoref{theo:IncreasingBatch}}
\label{theo:proof:IncreasingBatch}
We write $\eta_{n+1} = B_n + \left( \eta_{n+1} - B_n \right)$
\pma{where $B_n$ is given by \eqref{eq:MonteCarlo:bias}.}  Observe that $\{\eta_{n+1} - B_n, n
\in \nset \}$ is a martingale-increment sequence.  Sufficient
conditions for the almost-sure convergence of a martingale and the
control of $L^q$-moments can be found in ~\cite[Theorems 2.10 and
2.17]{hall:heyde:1980}. Then the proof follows from
\autoref{prop:erreurLp:sumMC} and \autoref{prop:AS:moments}.

\appendix

\section{\autoref{sec:ex2}}
\label{app:ex_fun}
By using the Cauchy-Schwartz inequality, it holds
\begin{align*}
  & \int \exp(\ell_c(\theta\vert \bu)) \, \phi(\bu) \rmd \bu \geq \left( \int
    \exp(0.5\ell_c(\theta\vert
    \bu)) \,  \phi(\bu) \rmd \bu\right)^{1/2} \\
  & \left( \int \exp(\ell_c(\theta\vert \bu)) \, \|u\|^2 \, \phi(\bu) \, \rmd \bu
  \right)^2 \leq \left( \int \exp(0.5\ell_c(\theta\vert \bu)) \phi(\bu) \, \rmd
    \bu\right) \ \left( \int \exp(3 \ell_c(\theta\vert \bu)/2) \|u\|^4
    \phi(\bu) \rmd \bu\right)
\end{align*}
which implies that
\begin{align*}
  \int \|u\|^2 \pi_\theta(\bu) \rmd \bu & = \frac{ \int \exp(\ell_c(\theta\vert
    \bu)) \|u\|^2 \phi(\bu) \rmd \bu}{\int \exp(\ell_c(\theta\vert v)) \phi(v)
    \rmd v}  \\
  & \leq \left(\int \exp\left(3 \ell_c(\theta \vert\bu) /2 \right) \, \| \bu
    \|^4 \, \phi(\bu) \, \rmd \bu\right)^{1/2}
\end{align*}
Since $\exp(\ell_c(\theta \vert \bu)) \leq 1$ (it is the likelihood of i.i.d.
Bernoulli variables) and $\int \|\bu\|^4 \phi(\bu) \rmd \bu =q(2+q)$, we have
\[
\sup_{\theta \in \Theta} \int \|u\|^2 \pi_\theta(\bu) \rmd \bu \leq \sqrt{q(2+q)} \eqsp.
\]

\section{ \autoref{sec:ex1}}\label{app:example:network}
For $\theta,\vartheta\in\Theta$, the $(i,j)$-th entry of the matrix
$\nabla\ell(\theta)-\nabla\ell(\vartheta)$ is given by
\[\left(\nabla\ell(\theta)-\nabla\ell(\vartheta)\right)_{ij}=\int_{\Xset^p} \bar B_{ij}(x)\pi_\vartheta(\rmd x)-\int_{\Xset^p} \bar B_{ij}(x)\pi_\theta(\rmd x).\]
For $t\in  \ccint{0,1}$ let
\[\pi_t(dz)\eqdef \exp\left(\pscal{\bar B(z)}{t\vartheta+(1-t)\theta}\right)/\int \exp\left(\pscal{\bar B(x)}{t\vartheta+(1-t)\theta}\right)\mu(\rmd x),\]
 defines a probability measure on $\Xset^p$. It is straightforward to check that
\[\left(\nabla\ell(\theta)-\nabla\ell(\vartheta)\right)_{ij}=\int \bar B_{ij}(x)\pi_1(\rmd x)-\int \bar B_{ij}(x)\pi_0(\rmd x),\]
 and that $t\mapsto \int \bar B_{ij}(x)\pi_t(\rmd x)$ is differentiable with derivative
\begin{eqnarray*}
\frac{\rmd}{\rmd t}\int \bar B_{ij}(x)\pi_t(\rmd x)&=&\int \bar B_{ij}(x )\pscal{\bar B(x)-\int \bar B(z)\pi_t(\rmd z)}{\vartheta-\theta}\pi_t(\rmd x),\\
&=& \textsf{Cov}_{\pi_t}\left(\bar B_{ij}(X),\pscal{\bar B(X)}{\vartheta-\theta}\right),\end{eqnarray*}
where the covariance is taken assuming that $X\sim\pi_t$. Hence
\begin{align*}
\left|\left(\nabla\ell(\theta)-\nabla\ell(\vartheta)\right)_{ij}\right|
&=\left|\int_0^1 \rmd t \ \ \textsf{Cov}_t\left(\bar B_{ij}(X),\pscal{\bar B(X)}{\vartheta-\theta}\right)\right|\\
&\leq \textsf{osc}(\bar B_{ij})\sqrt{\sum_{k\leq l }\textsf{osc}^2(\bar B_{kl})} \|\theta-\vartheta\|_2.
\end{align*}
This implies the inequality (\ref{eq:propExample}).

{\bf Acknowledgments:} We are grateful to George Michailidis for very helpful
discussions. This work is partly supported by NSF grant DMS-1228164.

\bibliographystyle{ims}
\bibliography{biblio}

\end{document}